\newtheorem {theorem}{Theorem}[section]
\newtheorem {lemma}{Lemma}[section]
\newtheorem {example}{Example}[section]
\newtheorem {definition}{Definition}[section]
\def\ar{a\kern-.370em\raise.16ex\hbox{\char95\kern-0.53ex\char'47}\kern.05em}
\def\ees{{\accent"5E e}\kern-.385em\raise.2ex\hbox{\char'23}\kern-.08em}
\def\eex{{\accent"5E e}\kern-.470em\raise.3ex\hbox{\char'176}}
\def\AR{A\kern-.46em\raise.80ex\hbox{\char95\kern-0.53ex\char'47}\kern.13em}
\def\EES{{\accent"5E E}\kern-.5em\raise.8ex\hbox{\char'23 }}
\def\EEX{{\accent"5E E}\kern-.60em\raise.9ex\hbox{\char'176}\kern.1em}
\def\ow{o\kern-.42em\raise.82ex\hbox{
        \vrule width .12em height .0ex depth .075ex \kern-0.16em \char'56}\kern-.07em}
\def\OW{O\kern-.460em\raise1.36ex\hbox{
        \vrule width .13em height .0ex depth .075ex \kern-0.16em \char'56}\kern-.07em}
\def\UW{U\kern-.42em\raise1.36ex\hbox{
        \vrule width .13em height .0ex depth .075ex \kern-0.16em \char'56}\kern-.07em}
\def\DD{D\kern-.7em\raise0.4ex\hbox{\char '55}\kern.33em}
\title[]{Second-order KKT optimality conditions for multi-objective optimal  control problems}
\author{BUI TRONG KIEN$^{1,2}$}
\address{$^1$Institute of Research and Development, Duy Tan University, 03 Quang Trung, Da Nang, Vietnam }
\address{$^2$Department of Optimization and Control Theory, Institute of Mathematics, VAST, 18 Hoang Quoc Viet road,  Hanoi, Vietnam}
\email{btkien@math.ac.vn}
\author{NGUYEN VAN TUYEN$^3$}
\address{$^3$Department of Mathematics, Hanoi Pedagogical University 2, Xuan Hoa, Phuc Yen, Vinh Phuc, Vietnam}
\email{tuyensp2@yahoo.com; nguyenvantuyen83@hpu2.edu.vn}
\author{Jen-Chih Yao$^4$}
\address{$^4$Center for General Education, China Medical University, Taichung 40402, Taiwan}
\email{yaojc@mail.cmu.edu.tw}
\keywords{Second-order optimality conditions~$\cdot$~Multi-objective optimal  control problems~$\cdot$~Weak Pareto solutions~$\cdot$~The Robinson constraint qualification}
\subjclass{49K15~$\cdot$~90C29}
\date{ \today}
\begin{document}
\maketitle

\begin{abstract} In this paper, we study  second-order necessary and sufficient optimality conditions of Karush--Kuhn--Tucker-type
 for locally optimal solutions in the sense of Pareto to a class of multi-objective optimal control problems with mixed pointwise constraint.
To deal with the    problems, we first derive second-order
optimality conditions for    abstract multi-objective optimal
control problems which satisfy the    Robinson constraint
qualification. We then apply the obtained  results to our concrete
problems. The proofs of obtained results are direct, self-contained
without using scalarization techniques.
\end{abstract}

\section{Introduction}\label{Introduction}
Let $L_j\colon [0, 1]\times\mathbb{R}^n\times\mathbb{R}^l\to \mathbb{R}$
with $j=1, 2, \ldots, m$,  $\varphi\colon [0,
1]\times\mathbb{R}^n\times\mathbb{R}^l\to \mathbb{R}^n$, and $g \colon [0,
1]\times\mathbb{R}^n\times\mathbb{R}^l\to \mathbb{R}$ be given
functions. We consider the  multi-objective optimal control problem
of finding a control vector $u\in L^\infty([0, 1], \mathbb{R}^l)$
and the corresponding state $x\in C([0, 1], \mathbb{R}^n)$ which solve
\begin{align}
&\textrm{Min}_{\,\mathbb{R}^m_+}\ I(x, u)\label{P1}\\
&\textrm{s.t.} \quad \;  x(t)=x_0+\int_0^t\varphi(s, x(s),
u(s))ds\ \ \text{for a.e.}\ \
t\in [0, 1],\label{P2}\\
& \quad\quad\ \ g(t, x(t), u(t))\leq 0\ \ \text{for a.e.}\ \ t\in [0,
1].\label{P4}
\end{align}
Here $x_0$ is a given vector in $\mathbb{R}^n$ and the multi-objective function $I$ is given by
$$I(x, u)=(I_1(x, u), I_2(x, u), \ldots, I_m(x, u)),
$$
where
\begin{equation*}
I_j(x, u):=\int_0^1 L_j(t, x(t), u(t))dt.
\end{equation*}
We denote by (MCP) the problem
\eqref{P1}--\eqref{P4} and by $\Phi$ its feasible set, that is,
$\Phi$ consists of couples $(x, u)\in C([0, 1], \mathbb{R}^n)\times
L^\infty([0, 1], \mathbb{R}^l)$ which satisfy constraints
\eqref{P2}--\eqref{P4}.

The multi-objective optimal control problems are important in
mechanics and economy. For example, when we want to minimize
energy and time of a system, we need to use two-objective optimal
control which has a form like \eqref{P1}--\eqref{P4} (see for
instance \cite{Kaya}). Recently, problem (MCP) has been
studied by several mathematicians. For papers which have a close
connection to the present work,  we refer the readers to
\cite{Bellaassali,Bonnel2,Greksch,Kaya,Kien1,Ngo,Olive,Olive2,Shao,Zhu}
and the references therein. In these papers, the authors mainly studied
numerical methods and first-order necessary optimality conditions
for multi-objective optimal control problems. However, to the best
of our knowledge, so far there have been no  papers  investigating
second-order optimality conditions for multi-objective optimal
control problems. The study of second-order optimality conditions
for optimization problems as well as for multi-objective optimal
control problems is a fundamental topic in optimization theory. The
second-order optimality conditions play an important role in
solution stability and numerical methods of finding optimal
solutions.

In this paper, we will focus on  deriving second-order necessary
optimality conditions  and second-order  sufficient optimality
conditions of Karush--Kuhn--Tucker (KKT) type for the multi-objective
optimal control problem (MCP). In order to establish second-order
KKT optimality conditions for the (MCP), we first derive second-order
optimality conditions for abstract multi-objective optimal control
problems which satisfy the Robinson constraint qualification. We
then apply the obtained results to our concrete problem.

In contrast with multi-objective optimal control problems, there
have been some papers dealing with second-order KKT optimality
conditions for vector optimization problems recently. For papers of
this topic, we refer the reader to \cite{Gfrerer,Jimenez03,Jimenez04,Ning}  and  references given therein.
In \cite{Gfrerer,Ning}, the second-order KKT optimality conditions were derived by scalarization method via
 the so-called oriented distance function which was used by Ginchev et al. in \cite{Ginchev} for the first time.
 However, this approach has also some certain limits because the oriented distance function is often nonsmooth.
 In \cite{Jimenez03,Jimenez04}, by using Motzkin's theorem of the alternative, Jim\'enez et al. presented some
 second-order KKT optimality conditions for vector optimization problems  under  suitable constraint qualification
 conditions. Although the constraint qualification conditions used in \cite{Jimenez03,Jimenez04}  are  weaker than
 the  Robinson constraint qualification, the ``sigma''  terms in the obtained second-order conditions do not vanish.
 In addition, those results  can not apply to the (MCP) directly as the Robinson constraint qualification does not hold for the (MCP).

In the present paper, we derive second-order KKT optimality conditions for vector optimization directly via separation theorems.  We then establish second-order KKT conditions without sigma terms for an abstract multi-objective optimal control problem under the Robinson constraint qualification. It is worth pointing out that our method  is natural and intrinsic. The obtained results approach a theory of no-gap second-order optimality conditions for multi-objective optimal control problems.

The paper is organized as follows. In Section \ref{main-results}, we  set up notation and terminology, and state main results. Section \ref{abstract-MP} is intended to derive second-order KKT optimality conditions for a class of vector
optimization problems. In Section \ref{abstract-MCP}, we establish second-order KKT necessary optimality conditions for an abstract multi-objective
optimal control problem, which is based on the obtained result of
Section \ref{abstract-MP}. The proofs of the main results will be provided in Section \ref{proofs}. In Section \ref{illustrate-example}, we give  some examples to illustrate the main results.

\section{Assumptions and statements of the main results}
\label{main-results}
In this section, $C([0, 1], \mathbb{R}^n)$ is the Banach space of
continuous vector-valued functions $x \colon [0, 1]\to\mathbb{R}^n$ with the norm $\|x\|_0=\displaystyle\max_{t\in[0, 1]}|x(t)|$ and $\mathbb{R}^n$  is the Euclidean space of $n$-tuples $\xi=(\xi_1, \ldots, \xi_n)$ with the norm $|\xi|=\left(\sum_{i=1}^{n}\xi_i^2\right)^{\frac{1}{2}}$.
 For each $1\leq p\leq\infty$, $L^p([0, 1], \mathbb{R}^l)$ stands for the Lebesgue
spaces with the norms $\|\cdot\|_p$. For convenience, we put
$X=C([0, 1], \mathbb{R}^n)$ and $U= L^\infty([0, 1], \mathbb{R}^l)$.
In the sequel, $L$ and $\phi $ stand for $(L_1, \ldots, L_m)$ and  $(\varphi, g)$, respectively. Define
$$
Q=\left\{v\in L^\infty\left([0, 1], \mathbb{R}\right)\;|\; v(t)\leq
0, {\rm a.e.}\ t\in [0, 1]\right\}.
$$

Let us impose the following assumptions  on $L$ and $\phi$.
\begin{enumerate}
    \item [$(H1)$] The  function $L$ is a Carath\'{e}odory function
    and $\phi$ is a continuous mapping. For a.e. $t\in [0, 1]$, $L(t,
    \cdot, \cdot)$ is of class $C^2$ while $\phi(t, \cdot, \cdot)$ is of
    class $C^2$ for all $t\in [0, 1]$ . Besides, for each $M>0$,
    there exist numbers $k_{LM}>0$ and $k_{\phi M}>0$ such that
    \begin{align*}
    |L(t, x_1, u_1)-L(t, x_2, u_2)|&+|\nabla L(t, x_1, u_1)-\nabla L(t,
    x_2, u_2)|+\\
    &+|\nabla^2 L(t, x_1, u_1)-\nabla^2 L(t, x_2, u_2)|\leq
    k_{LM}(|x_1-x_2| +|u_1-u_2|)
    \end{align*} for a.e. $t\in [0, 1]$ and for all $x_i\in\mathbb{R}^n$,
    $ u_i\in\mathbb{R}^l$ satisfying $|x_i|\leq M$, $|u_i|\leq M$ with $i=1,2$, and
    \begin{align*}
    |\phi(t, x_1, u_1)-\phi(t, x_2, u_2)|&+|\nabla \phi(t, x_1,
    u_1)-\nabla \phi(t, x_2, u_2)|+\\
    &+\left|\nabla^2 \phi(t, x_1, u_1)-\nabla^2 \phi(t, x_2, u_2)\right|\leq
    k_{\phi M}(|x_1-x_2| +|u_1-u_2|)
    \end{align*} for all $t\in [0, 1]$, $x_i\in\mathbb{R}^n$,
    $ u_i\in\mathbb{R}^l$ satisfying $|x_i|\leq M$, $|u_i|\leq M$ with $i=1,2$.
    Moreover, we require that the functions
    $$
    L(t, 0,0), |\nabla L(t, 0, 0)|, \left|\nabla^2 L(t, 0, 0)\right|, \phi(t, 0,
    0), |\nabla\phi(t, 0, 0)|, \left|\nabla^2\phi(t, 0, 0)\right|
    $$ belong to $L^\infty([0,1], \mathbb{R})$.
    \item [$(H2)$] Given a couple $(\bar x, \bar u)\in \Phi$, there
    exist $i_0\in\{1,2, \ldots, l\}$ and $\alpha>0$ such that
    $$
    |g_{u_{i_0}}(t, \bar x(t), \bar u(t))|\geq \alpha\ \ \text{for a.e.}\ \ t\in [0,
    1].
    $$
\end{enumerate}

Hereafter, $L[t], \varphi[t], g[t], L_x[t], \varphi_u[t], g_u[t]$
and so on,  stand for
\begin{align*}
&L(t, \bar x(t), \bar u(t)),\  \varphi(t, \bar x(t),\ \bar u(t)),
g(t, \bar x(t), \bar u(t)),\\
&L_x(t, \bar x(t), \bar u(t)),\  \varphi_u(t, \bar x(t),\ \bar
u(t)), g_u(t, \bar x(t), \bar u(t)), \ldots
\end{align*}
 We denote by $\mathbb{R}^m_+$ the nonnegative orthant of $\mathbb{R}^m$, where $m\in\mathbb{N}:=\{1, 2, \ldots\}$. The interior of $\mathbb{R}^m_+$ is denoted by $\mathrm{int}\,\mathbb{R}^m_+$.
\begin{definition}{   Assume that $\bar z=(\bar x, \bar u)$ is a feasible point of the (MCP). We say that:
\begin{enumerate}
    \item [(i)] $\bar z$ is a  locally weak Pareto solution  of the (MCP) if there exists $\epsilon>0$ such
    that for all $(x, u)\in (B(\bar x, \epsilon)\times B(\bar u,
    \epsilon))\cap\Phi$, one has
    $$
    I(x, u)-I(\bar x, \bar u)\notin -{\rm int}\, \mathbb{R}^m_+.
    $$
    \item [(ii)] $\bar z$ is a  locally Pareto solution of the (MCP) if there exists $\epsilon>0$ such
    that for all $(x, u)\in (B(\bar x, \epsilon)\times B(\bar u,
    \epsilon))\cap\Phi$, one has
    $$
    I(x, u)-I(\bar x, \bar u)\notin - \mathbb{R}^m_+\setminus\{0\}.
    $$
\end{enumerate}

}
\end{definition}

Let us denote by $\mathcal{C}_0(\bar z)$ the set of vectors $z=(x,
u)\in C([0, 1], \mathbb{R}^n)\times L^\infty([0, 1], \mathbb{R}^l)$
such that the following conditions hold:
\begin{enumerate}
    \item [($c_1$)] $\int_0^1 \left(L_x[t]x(t) +L_u(t) u(t)\right)dt\in
    - \mathbb{R}^m_+$;
    \item [($c_2$)]
    $x(\cdot)=\int_0^{(\cdot)}\big(\varphi_x[s]x(s)+\varphi_u[s]u(s)\big)ds$;
     \item [($c_3$)] $g_x[\cdot]x+ g_u[\cdot]u\in {\rm
     cone}(Q-g(\cdot,\bar x, \bar u))$.
\end{enumerate} Let  $\mathcal{C}(\bar
z)$ be the closure of $\mathcal{C}_0(\bar z)$ in $C([0, 1],
\mathbb{R}^n)\times L^\infty([0, 1], \mathbb{R}^l)$. We call
$\mathcal{C}(\bar z)$ the {\em critical cone} of the (MCP) at $\bar z$. Each
vector $z\in \mathcal{C}(\bar z)$ is called a {\em critical direction} to
the (MCP) at $\bar z$. It is easily seen  that $\mathcal{C}(\bar z)$ is
a closed convex cone containing $0$.

The following theorem gives necessary optimality conditions for the
(MCP).

\begin{theorem}\label{SOC-necessary-condition} Suppose that assumptions $(H1)$ and $(H2)$ are valid and
$\bar z$ is a locally weak Pareto solution of the (MCP). Then, for each $z\in\mathcal{C}(\bar z)$, there exist a vector
$\lambda\in \mathbb{R}^m_+$ with $|\lambda|=1$, an absolutely
continuous function $\bar p\colon [0, 1]\to \mathbb{R}^n$ and a
function $\theta\in L^1([0, 1], \mathbb{R})$ such that the
following conditions are fulfilled:
\begin{enumerate}
    \item [(i)] $\theta\in N(Q, g[\cdot]);$

    \item [(ii)] (the adjoint equation)
    \begin{equation}\notag
    \begin{cases}
    \dot{\bar p}(t)= - \lambda^T L_x[t]-\varphi_x[t]\bar p(t)-
    \theta(t)g_x[t] \ \ \text{a.e.} \ \ t\in [0, 1],\\
    \bar p(1)=0;
    \end{cases}
    \end{equation}

    \item [(iii)] (the stationary condition in $u$)
    $$
    \lambda^T L_u[t] +\bar p^T \varphi_u[t] +\theta(t)
    g_u[t]=0 \ \ \text{a.e.} \ \ t\in [0, 1];
    $$

    \item [(iv)] (the non-negative condition)
    \begin{align*}
    \int_0^1\left(\sum_{j=1}^m\lambda_j\nabla^2 L_j[t]z(t),z(t)\right)dt&+ \int_0^1
    \left(\bar p(t)^T\nabla^2\varphi[t]z(t), z(t)\right)dt
    \\
    &+\int_0^1\left(\theta(t)\nabla^2 g[t]z(t), z(t)\right)dt\geq 0.
    \end{align*}
\end{enumerate}
\end{theorem}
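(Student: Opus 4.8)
The plan is to recast (MCP) as an abstract multi-objective optimization problem with cone constraints, to verify the Robinson constraint qualification, and then to invoke the abstract second-order necessary conditions of Section \ref{abstract-MCP}. Concretely, I would take the objective to be $I\colon X\times U\to\mathbb{R}^m$ ordered by $\mathbb{R}^m_+$, and encode the constraints as follows: the state equation \eqref{P2} becomes an operator equation $G(x,u)=0$, where $G(x,u)(t)=x(t)-x_0-\int_0^t\varphi(s,x(s),u(s))\,ds$ maps $X\times U$ into $X$; the mixed pointwise constraint \eqref{P4} becomes $g(\cdot,x,u)\in Q$, with $Q\subset L^\infty([0,1],\mathbb{R})$ the cone of a.e.\ nonpositive functions. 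Assumption $(H1)$ guarantees that $I$ and $G$ are of class $C^2$ between the relevant Banach spaces, so this reformulation fits the framework of Section \ref{abstract-MCP}.

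The technical heart, and the step I expect to be the main obstacle, is verifying the Robinson constraint qualification for the joint constraint $(G(x,u),g(\cdot,x,u))\in\{0\}\times Q$ at $\bar z$. Here I would use $(H2)$ decisively. Since the linearized state operator $z\mapsto G'(\bar z)z$ restricted to the state variable is a bijection of $X$ by the unique solvability of the linear Volterra equation $x(\cdot)=\int_0^{(\cdot)}(\varphi_x[s]x(s)+\varphi_u[s]u(s))\,ds+h(\cdot)$, I can first solve the state equation for $x$ in terms of $u$ and thereby reduce the inequality constraint to one depending on $u$ alone. The bound $|g_{u_{i_0}}[t]|\ge\alpha$ a.e.\ then shows that perturbing the $i_0$-th component of the control moves the value $g[\cdot]$ in any prescribed direction of $L^\infty$, which supplies the surjectivity required for the inequality part. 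Combining these two facts is exactly the Robinson regularity condition for the joint constraint.

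Once the Robinson constraint qualification is established, I would check that the abstract critical cone of the reformulated problem coincides with $\mathcal{C}(\bar z)$, reading off $(c_1)$--$(c_3)$ as the linearizations of the objective, the state equation, and the inequality constraint. Then, for the fixed critical direction $z\in\mathcal{C}(\bar z)$, I would apply the abstract second-order necessary optimality theorem of Section \ref{abstract-MCP} to produce abstract Lagrange multipliers: a vector $\lambda\in\mathbb{R}^m_+$ normalized by $|\lambda|=1$ associated with $I$, a functional associated with the state equation, and a functional in the polar of $Q$ associated with the inequality constraint, together with the abstract nonnegativity of the Hessian of the Lagrangian along $z$.

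The final step is to translate these abstract multipliers into the pointwise data $(\bar p,\theta)$. Representing the inequality multiplier by a function $\theta\in L^1([0,1],\mathbb{R})$ and using the description of the polar cone of $Q$ yields condition (i), namely $\theta\in N(Q,g[\cdot])$, which encodes the sign and complementarity. Introducing the costate $\bar p$ from the state-equation multiplier and integrating by parts converts the abstract adjoint identity into the differential adjoint equation (ii) with transversality $\bar p(1)=0$, while the $u$-component of the abstract stationarity gives the pointwise stationarity (iii). Finally, writing out the second derivatives of $I$, $G$ and $g$ and substituting them into the abstract Hessian inequality produces (iv). The only genuinely routine parts are the integration by parts and the explicit second-order computation; the conceptual content lies entirely in the verification of the Robinson constraint qualification.
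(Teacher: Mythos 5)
Your overall strategy coincides with the paper's: reformulate (MCP) as the abstract problem of Section \ref{abstract-MCP} with the state equation as an operator equation into $C([0,1],\mathbb{R}^n)$ and the mixed constraint as membership of $g(\cdot,x,u)$ in the cone $Q\subset L^\infty$, verify the regularity hypotheses $(A1)$--$(A3)$ (the Volterra solvability for the state part, and $(H2)$ for surjectivity onto $L^\infty$ via the $i_0$-th control component), apply Theorem \ref{ThSOC2}, and then translate the abstract multipliers into $(\bar p,\theta)$. Up to that point the proposal is faithful to the paper.

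However, there is a genuine gap in your final step, precisely where you declare the work to be routine. The multiplier attached to the constraint $G(x,u)\in Q$ produced by the abstract theorem is an element $e^*$ of $(L^\infty([0,1],\mathbb{R}))^*$, which is the space of bounded finitely additive measures and strictly contains $L^1$; a generic element of $N(Q;g[\cdot])$ in this dual has no integral representation at all (it may have a purely singular part). So ``representing the inequality multiplier by a function $\theta\in L^1$'' is not a translation but a claim that must be proved, and it is false without further input. The paper proves it by feeding the abstract stationarity condition in $u$ back into itself: using $(H2)$ and a measurable selection result of P\'ales--Zeidan, every $v\in L^\infty$ is realized as $G_u(\bar z)a(\cdot,v(\cdot))$ modulo a tangent direction with $|a(t,v(t))|\le R|v(t)|$, which yields the bound $|\langle e^*,v\rangle|\le R\int_0^1(|\lambda L_u[s]|+|\bar p(s)^T\varphi_u[s]|)|v(s)|\,ds$ and hence, by a result of Ioffe--Tihomirov, an $L^1$ density $\theta$. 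Note that this is a second, independent use of $(H2)$, beyond the constraint qualification. A smaller but related omission: the multiplier for the state equation is a priori only a measure $\nu$ of bounded variation, and deriving the pointwise adjoint ODE with an absolutely continuous $\bar p$ requires testing the weak identity against discontinuous functions $\xi\chi_{(t,1]}$, which the paper handles with a Lebesgue-point approximation argument (Lemma \ref{Keylemma}); this is more than integration by parts. With these two steps supplied, your outline matches the paper's proof.
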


A point $\bar z\in\Phi$ satisfying conditions (i)--(iii) of Theorem \ref{SOC-necessary-condition} w.r.t $(\lambda, \bar p, \theta)$  is called a KKT point of the (MCP).

In multi-objective optimization problems, the critical cone for
second-order sufficient conditions is often required  bigger than
the one for second-order necessary conditions. Therefore, we need to
enlarge $\mathcal{C}(\bar z)$ to deal with second-order sufficient
conditions. We denote by $\mathcal{C}'(\bar z)$ the set of
vectors $(x, u)\in C([0, 1], \mathbb{R}^n)\times L^2([0, 1],
\mathbb{R}^l)$ which satisfy the following conditions:
\begin{enumerate}
    \item [($c_1^\prime$)] $\int_0^1 \left(L_x[t]x(t) +L_u[t] u(t)\right)dt\in
    -\mathbb{R}^m_+$;

    \item [($c_2^\prime$)] $x(\cdot)=\int_0^{(\cdot)}\big(\varphi_x[s]x(s)+\varphi_u[s]u(s)\big)ds$;

    \item [($c_3^\prime$)] $g_x[t]x(t)+ g_u[t]u(t)\in T((-\infty, 0]; g[t])$
    for a.e. $t\in[0, 1]$.
\end{enumerate}

Obviously, $\mathcal{C}'(\bar z)$ is a closed convex cone and
$\mathcal{C}(\bar z)\subset \mathcal{C}'(\bar z)$.

We now introduce the concept of locally strong Pareto solution for the  multi-objective optimal control problem  (MCP).
\begin{definition}\label{Def2}{ Let $\bar z=(\bar x, \bar u)\in \Phi$ be a feasible point of the (MCP). We say that $\bar z$ is a  locally strong Pareto solution of the (MCP) if there exist a number
$\epsilon>0$ and a vector $c\in \mathrm{int}\,\mathbb{R}^m_+$ such that
for all $(x, u)\in (B_X(\bar x, \epsilon)\times B_U(\bar u,
\epsilon))\cap\Phi$, one has
$$
I(x, u)-I(\bar x, \bar u)-c\|u-\bar u\|^2_2\notin
-\mathbb{R}^m_+\setminus\{0\}.
$$
}
\end{definition} Clearly, every locally strong Pareto  solution of the (MCP) is
 also a locally  Pareto  solution of this problem. Note that in
Definition \ref{Def2} we use two norms to define locally strong
Pareto solutions. Here $B_U(\bar u, \epsilon))$  is a ball  in
$L^\infty([0, 1], \mathbb{R}^l)$ while $\|u-\bar u\|_2$ is the norm
in $L^2([0, 1], \mathbb{R}^l)$.

The following theorem provides sufficient conditions for locally
strong Pareto solutions.

\begin{theorem} \label{SOC-sufficient-condition} Suppose that assumptions $(H1)$ and $(H2)$ are valid and
    $\bar z\in \Phi$. Suppose that there exist a vector $\lambda\in \mathbb{R}^m_+$
    with $|\lambda|=1$, an absolutely continuous function $\bar p \colon [0,
    1]\to \mathbb{R}^n$ and a function $\theta\in L^2([0, 1],
    \mathbb{R})$ satisfy conditions $(i), (ii)$ and $(iii)$ of Theorem
    \ref{SOC-necessary-condition} and the strict second-order condition
    \begin{align}
    \int_0^1\left(\lambda^T\nabla^2 L_j[t]z(t),z(t)\right)dt&+ \int_0^1 \left(\bar
    p(t)^T\nabla^2\varphi[t]z(t), z(t)\right)dt\notag \\
    &+\int_0^1\left(\theta(t)\nabla^2
    g[t]z(t), z(t)\right)dt>0 \label{StrSOC}
    \end{align} for all $z=(x, u)\in\mathcal{C}'(\bar z)\setminus\{0\}$.
    Furthermore,  there is a number $\gamma_0>0$ such that
    \begin{equation}\label{StrSOC1}
    \lambda^TL_{uu}[t](\xi, \xi)\geq \gamma_0|\xi|^2\ \ \forall
    \xi\in\mathbb{R}^l.
    \end{equation}
    Then, $(\bar x, \bar u)$ is a locally strong Pareto solution of the
    (MCP).
\end{theorem}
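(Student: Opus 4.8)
The plan is to argue by contradiction, supposing that $\bar z=(\bar x,\bar u)$ is a KKT point satisfying the strict second-order condition \eqref{StrSOC} together with the coercivity assumption \eqref{StrSOC1}, but is not a locally strong Pareto solution. Negating Definition \ref{Def2} means: for every $\epsilon>0$ and every $c\in\mathrm{int}\,\mathbb{R}^m_+$, there is a feasible point violating the defining inequality. I would fix a suitable $c$ (for instance a small multiple of $\lambda$, or of the all-ones vector) and let $\epsilon\to 0$ to extract a sequence $z_k=(x_k,u_k)\in\Phi$ with $z_k\to\bar z$ in $X\times U$ such that
\begin{equation*}
I(x_k,u_k)-I(\bar x,\bar u)-c\|u_k-\bar u\|_2^2\in-\mathbb{R}^m_+\setminus\{0\}.
\end{equation*}

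**Next I would** set $t_k:=\|u_k-\bar u\|_2$ (or an analogous scaling parameter, possibly $t_k=\|(x_k,u_k)-(\bar x,\bar u)\|$ in a combined norm) and normalize the increments by defining $z_k':=(z_k-\bar z)/t_k=(x_k',u_k')$. After passing to a subsequence I would establish that $u_k'$ converges weakly in $L^2$ to some $u'$, and that the corresponding $x_k'$ converges (using the linearized state equation \eqref{P2} and a Gronwall-type estimate) to the $x'$ solving condition $(c_2')$. The heart of this step is to show that the limit $z'=(x',u')$ lies in the enlarged critical cone $\mathcal{C}'(\bar z)$: condition $(c_1')$ follows by dividing the objective inequality by $t_k$ and passing to the limit using the first-order KKT relations (i)--(iii), condition $(c_2')$ is the linearized dynamics, and condition $(c_3')$ comes from the feasibility of each $z_k$ with respect to \eqref{P4} combined with the tangent-cone characterization of $T((-\infty,0];g[t])$.

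**The decisive computation** is a second-order Taylor expansion of the Lagrangian-type functional along $z_k$. Using the adjoint function $\bar p$ and multiplier $\theta$ to absorb the constraint terms (exactly as the stationarity conditions (ii) and (iii) are designed to do), the first-order part vanishes and one is left, after dividing by $t_k^2$, with the Hessian quadratic form
\begin{equation*}
\int_0^1\Big(\lambda^T\nabla^2 L[t]z'(t),z'(t)\Big)dt+\int_0^1\Big(\bar p(t)^T\nabla^2\varphi[t]z'(t),z'(t)\Big)dt+\int_0^1\Big(\theta(t)\nabla^2 g[t]z'(t),z'(t)\Big)dt
\end{equation*}
on the left-hand side, while the normalized objective inequality forces this quantity to be nonpositive in the limit. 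If $z'\neq 0$ this contradicts \eqref{StrSOC}, since $z'\in\mathcal{C}'(\bar z)$. The remaining case $z'=0$ is exactly where the coercivity hypothesis \eqref{StrSOC1} enters: I expect this to be the main obstacle, because weak $L^2$-convergence alone does not control the quadratic terms under the Hessian, and one must rule out the loss of mass $u_k'\rightharpoonup 0$. The assumption $\lambda^T L_{uu}[t](\xi,\xi)\ge\gamma_0|\xi|^2$ provides a Legendre-type coercivity in the control variable: combined with the $C^2$-smoothness and local Lipschitz bounds from $(H1)$, it yields a lower bound of the form $\gamma_0\,t_k^2\,\|u_k'\|_2^2+o(t_k^2)$ on the Hessian contribution, which is incompatible with the inequality unless $\|u_k'\|_2\to 0$. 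Since $t_k=\|u_k-\bar u\|_2$ was chosen precisely so that $\|u_k'\|_2=1$, this is the contradiction that closes the argument and simultaneously handles the $z'=0$ degeneracy.
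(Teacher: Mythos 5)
Your overall strategy coincides with the paper's proof: contradiction, normalization by $t_k=\|u_k-\bar u\|_2$, weak $L^2$-limit of $\hat u_k$ and uniform (Gronwall plus compact embedding $W^{1,2}\hookrightarrow C$) limit of $\hat x_k$, membership of the limit in $\mathcal{C}'(\bar z)$, a second-order Taylor expansion of the Lagrangian whose first-order part vanishes by (ii)--(iii), the strict condition \eqref{StrSOC} forcing $\hat z=0$, and the Legendre-type coercivity \eqref{StrSOC1} clashing with $\|\hat u_k\|_2=1$. That is exactly Steps 1--3 of the paper.

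There is, however, one step that fails as written: you propose to \emph{fix} a single $c\in\mathrm{int}\,\mathbb{R}^m_+$ and let only $\epsilon\to 0$. The negation of Definition \ref{Def2} is universal in both $\epsilon$ and $c$, and the paper exploits this by taking $c_k\to 0$ along the sequence, which is what produces $\lambda^T\big(I(z_k)-I(\bar z)\big)\leq t_k^2|\lambda||c_k|=o(t_k^2)$ and hence $\nabla^2_{zz}\mathcal{L}(\bar z,\lambda,\bar p,\theta)(\hat z_k,\hat z_k)\leq o(1)$. With a fixed $c$ you only obtain $\nabla^2_{zz}\mathcal{L}(\bar z,\lambda,\bar p,\theta)(\hat z,\hat z)\leq 2\lambda^Tc>0$, which contradicts neither \eqref{StrSOC} (a strict inequality with no uniform lower bound on $\mathcal{C}'(\bar z)\setminus\{0\}$) nor, subsequently, the coercivity estimate $\gamma_0\leq 2\lambda^Tc$ unless $c$ is already small relative to $\gamma_0$ --- and even then Step 2 ($\hat z=0$) is lost, so the cross terms in Step 3 cannot be discarded. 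The fix is one line (choose $c_k\to 0$), but as stated the argument does not close. A second, more minor omission: you must rule out the degenerate case $u_k=\bar u$ (where $t_k=0$ and the normalization is undefined); the paper does this by a Gronwall argument on the state equation showing that $u_k=\bar u$ forces $x_k=\bar x$, contradicting $z_k\neq\bar z$.
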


\section{Abstract multi-objective optimization}
\label{abstract-MP}
Assume that $Z$ and $E$ are Banach spaces with the dual spaces $Z^*$
and $E^*$, respectively.  We consider the following multi-objective
optimization problem:
\begin{align}
&\textrm{Min}_{\,\mathbb{R}^m_+}\ \  f(z)=(f_1(z), \ldots, f_m(z))
\tag{MP1}\label{problem}
\\
&\textrm{s.t.}\quad \quad G(z)\in Q,\notag
\end{align}
where  $f\colon Z\to\mathbb{R}^m$ and $G\colon Z\to E$ are of class
$C^2$,  and $Q$ is a nonempty closed convex subset in $E$. We
denote by $\Sigma$ the feasible set of the \eqref{problem}, that is,
$$
\Sigma=\{z\in Z\;|\; G(z)\in Q\}.
$$
To derive optimality conditions for the \eqref{problem} we need some concepts
of variational analysis. Let $X$ be a Banach space with the dual
$X^*$,  $B_X$ and $B_X(x, r)$ stand for the closed unit ball and the
closed ball with center $x$ and radius $r$, respectively.  Given a subset $A$ of $X$, we denote the interior and the closure of $A$ respectively
by $\mathrm{int}\,A$ and $\overline{A}$.

Let $\Omega$ be a nonempty and closed subset in $X$ and $\bar x\in
\Omega$. The sets
 \begin{align*}
 T^\flat(\Omega; \bar x)&:=\left\{h\in X\;|\; \forall t_k\to 0^+, \exists h_k\to h, \bar x+t_kh_k\in \Omega \ \ \forall k\in\mathbb{N}\right\},
 \\
 T(\Omega; \bar x)&:=\left\{h\in X\;|\; \exists t_k\to 0^+, \exists h_k\to h, \bar x+t_kh_k\in \Omega \ \ \forall k\in\mathbb{N}\right\},
 \end{align*}
 are called {\em the adjacent tangent cone} and {\em the contingent cone} to $\Omega$ at $\bar x$, respectively. It is well-known that when $\Omega$ is convex, then
 $$T^\flat(\Omega; \bar x)=T (\Omega; \bar x)=\overline{\Omega(\bar x)},$$
 where $\Omega(\bar z)$ is defined as follows
 $$\Omega(\bar x):=\mathrm{cone}\,(\Omega-\bar x)=\{\lambda (h-\bar x)\;|\; h\in\Omega, \lambda>0\}.$$
Let $\bar x\in\Omega$ and $h\in X$. The sets
\begin{align*}
T^{2\flat}(\Omega; \bar x, h)&:=\left\{w\in X\;|\; \forall t_k\to 0^+, \exists w_k\to w, \bar x+t_kh+\frac{1}{2}t^2_kw_k\in \Omega\ \ \forall k\in\mathbb{N}\right\},
\\
T^2(\Omega; \bar x, h)&:=\left\{w\in X\;|\; \exists t_k\to 0^+, \exists w_k\to w, \bar x+t_kh+\frac{1}{2}t^2_kw_k\in \Omega\ \ \forall k\in\mathbb{N}\right\},
\end{align*}
are called {\em the second-order adjacent tangent set} and {\em the second-order contingent tangent set} to $\Omega$ at $\bar x$ in the direction $h$, respectively. Clearly, $T^{2\flat}(\Omega; \bar x, h)$ and $T^2(\Omega; \bar x, h)$ are closed sets and
\begin{equation*}
T^{2\flat}(\Omega; \bar x, h)\subset T^2 (\Omega; \bar x, h), T^{2\flat}(\Omega; \bar x, 0)=T^\flat(\Omega; \bar x), T^2 (\Omega; \bar x, 0)=T(\Omega; x).
\end{equation*}
It is noted that if $\Omega$ is convex, then so is
$T^{2\flat}(\Omega; \bar x, h)$. However, $T^2 (\Omega; \bar x, h)$
may not be convex when $\Omega$ is convex (see, for example,
\cite{Bonnans2000}).  In the case that $\Omega$ is a convex set, the
normal cone to $\Omega$ at $\bar x$ is defined by
\begin{equation*}
N(\Omega; \bar x):=\{x^*\in X^*\;|\; \langle x^*, x-\bar x\rangle\leq 0 \ \ \forall x\in\Omega\},
\end{equation*}
or, equivalently,
\begin{equation*}
N(\Omega; \bar x)=\{x^*\in X^*\;|\; \langle x^*, h\rangle\leq 0\ \ \forall h\in T(\Omega; \bar x)\}.
\end{equation*}

\begin{definition}{
We say that $\bar z\in \Sigma$ is a locally  weak  Pareto solution
of the \eqref{problem} if there exists $\epsilon>0$  such that for all
$z\in B_Z(\bar z, \epsilon)\cap \Sigma$, one has $f(z)-f(\bar
z)\notin - \mathrm{int}\,\mathbb{R}^m_+$. }
\end{definition}

We say that the {\em Robinson constraint qualification} holds at
$\bar z\in \Sigma$ if the following condition is verified
\begin{equation*}\label{RC1}
0\in \mathrm{int}\, \left[\nabla G(\bar z)(B_Z)-(Q-G(\bar z))\cap B_E\right].
\end{equation*} According to \cite[Theorem 2.1]{Zowe}, the Robinson
constraint qualification is equivalent to the following condition:
\begin{equation*}
E=\nabla G(\bar z)Z-{\rm cone}(Q-G(\bar z)).
\end{equation*} When the Robinson constraint
qualification holds at $\bar z$, we say $\bar z$ is a {\em regular
point} of the \eqref{problem}.  Hereafter we always assume that $\bar z$
is a feasible regular point of the \eqref{problem}.

Let us define the following critical cones
\begin{align*}
\mathcal{C}_1(\bar z)&=\{d\in Z\;|\; \nabla f(\bar z) d\in
-\mathbb{R}^m_+, \nabla G(\bar z)d\in T(Q; G(\bar z))\},
\\
\mathcal{C}_{01}(\bar z)&=\{d\in Z\;|\; \nabla f(\bar z) d\in
-\mathbb{R}^m_+, \nabla G(\bar z)d\in \text{cone}\,(Q-G(\bar
z))\},\\
\mathcal{C}_{1*}(\bar z)&=\overline{\mathcal{C}_{01}(\bar z)}.
\end{align*}
For each $d\in \mathcal{C}(\bar z)$, put
$$I(\bar z, d)=\{i\in I\,|\, \nabla f_i(\bar z)d=0\},$$
where  $I:=\{1, \ldots, m\}$.
\begin{lemma}\label{firt-order-lemma} If $\bar z$ is a locally weak  Pareto solution of the \eqref{problem}, then
    \begin{equation*}
    \mathcal{C}_1(\bar z)=\{d\in Z\;|\; \nabla f(\bar z) d\in -\mathbb{R}^m_+\setminus({\rm -int}\,\mathbb{R}^m_+), \nabla G(\bar z)d\in T(Q; G(\bar z))\}.
    \end{equation*}
Consequently, the following set
$$\{d\in Z\;|\; \nabla f(\bar z) d\in -\mathrm{int}\,\mathbb{R}^m_+, \nabla G(\bar z)d\in T(Q; G(\bar z))\}$$
is empty.
\end{lemma}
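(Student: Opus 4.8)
The plan is to prove the displayed ``consequently'' assertion first---that the linearized strict-descent set is empty---since the claimed description of $\mathcal{C}_1(\bar z)$ is an immediate reformulation of it. I would argue by contradiction: suppose there exists $d\in Z$ with $\nabla f(\bar z)d\in-\mathrm{int}\,\mathbb{R}^m_+$ and $\nabla G(\bar z)d\in T(Q;G(\bar z))$, and I will manufacture a feasible point arbitrarily close to $\bar z$ whose objective value strictly dominates $f(\bar z)$, contradicting local weak Pareto optimality.

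The first step converts the linearized feasibility $\nabla G(\bar z)d\in T(Q;G(\bar z))$ into genuine tangency to $\Sigma$. Here the hypothesis that $\bar z$ is a regular point is essential: under the Robinson constraint qualification one has the tangent-cone characterization
\[
T(\Sigma;\bar z)=\{d\in Z\;|\;\nabla G(\bar z)d\in T(Q;G(\bar z))\},
\]
where the nontrivial inclusion ``$\supseteq$'' is exactly what Robinson's condition buys (see \cite{Bonnans2000}). Consequently $d\in T(\Sigma;\bar z)$, so there are sequences $t_k\to0^+$ and $d_k\to d$ with $\bar z+t_kd_k\in\Sigma$ for every $k$. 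I regard this tangent-cone identity as the main obstacle: it is the only place where regularity enters, and all the analytic content of the lemma is concentrated in passing from an approximate linearized direction to an exact feasible curve.

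The second step is a routine first-order expansion along this curve. Since $f$ is $C^1$ and $t_kd_k\to0$, continuity of $\nabla f$ gives
\[
\frac{f(\bar z+t_kd_k)-f(\bar z)}{t_k}=\nabla f(\bar z)d_k+o(1)\longrightarrow\nabla f(\bar z)d.
\]
Because $-\mathrm{int}\,\mathbb{R}^m_+$ is open and the right-hand limit lies in it, the quotient lies in $-\mathrm{int}\,\mathbb{R}^m_+$ for all large $k$; multiplying by $t_k>0$ and using invariance of this cone under positive scaling yields $f(\bar z+t_kd_k)-f(\bar z)\in-\mathrm{int}\,\mathbb{R}^m_+$. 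As $\bar z+t_kd_k\to\bar z$, these points eventually lie in $B_Z(\bar z,\epsilon)\cap\Sigma$, contradicting the definition of a locally weak Pareto solution. This proves that the strict-descent set is empty.

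Finally I would deduce the equality. The inclusion ``$\supseteq$'' is trivial since $-\mathbb{R}^m_+\setminus(-\mathrm{int}\,\mathbb{R}^m_+)\subset-\mathbb{R}^m_+$. For ``$\subseteq$'', take $d\in\mathcal{C}_1(\bar z)$; then $\nabla f(\bar z)d\in-\mathbb{R}^m_+$ and $\nabla G(\bar z)d\in T(Q;G(\bar z))$. If $\nabla f(\bar z)d$ lay in $-\mathrm{int}\,\mathbb{R}^m_+$, then $d$ would belong to the strict-descent set just shown to be empty; hence $\nabla f(\bar z)d\in-\mathbb{R}^m_+\setminus(-\mathrm{int}\,\mathbb{R}^m_+)$, which places $d$ in the right-hand set and completes the proof.
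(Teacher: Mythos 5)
Your proposal is correct and follows essentially the same route as the paper: both rest on the tangent-cone identity $T(\Sigma;\bar z)=\nabla G(\bar z)^{-1}\bigl(T(Q;G(\bar z))\bigr)$ supplied by the Robinson constraint qualification (Cominetti's theorem), followed by a first-order expansion along a feasible sequence realizing the direction. The paper argues directly for $d\in\mathcal{C}_1(\bar z)$ using closedness of $\mathbb{R}^m\setminus(-\mathrm{int}\,\mathbb{R}^m_+)$, whereas you argue by contradiction using openness of $-\mathrm{int}\,\mathbb{R}^m_+$; these are the same argument in contrapositive form.
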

\begin{proof}
Thanks to \cite[Theorem 3.1]{cominetti90}, we have
$$
\nabla G(\bar z)^{-1}(T(Q; G(\bar z)))=T(G^{-1}(Q); \bar
z)=T(\Sigma; \bar z).
$$
Take $d\in \mathcal{C}_1(\bar z)$, then $d\in T(\Sigma; \bar z)$.
Hence there exists a sequence $\{(t_k, d_k)\}$ converging to $(0^+,
d)$ such that $\bar z+t_kd_k\in \Sigma$ for all $k\in\mathbb{N}$.
Since $\bar z$ is a locally weak  Pareto solution of the
\eqref{problem}, we may assume that
$$f(\bar z+t_kd_k)-f(\bar z)\in \mathbb{R}^m\setminus (-\text{int}\,\mathbb{R}^m_+)$$
for all $k\in \mathbb{N}$. From Mean Value Theorem for differentiable functions, we have
$$t_k\nabla f(\bar z)d_k+o(t_k)\in \mathbb{R}^m\setminus (-\text{int}\,\mathbb{R}^m_+),$$
or, equivalently,
\begin{equation*}\label{equa:4}
\nabla f(\bar z)d_k+\frac{o(t_k)}{t_k}\in \mathbb{R}^m\setminus (-\text{int}\,\mathbb{R}^m_+)
\end{equation*}
for all $k\in \mathbb{N}$. Letting $k\to\infty$, by the closedness of $\mathbb{R}^m\setminus (-\text{int}\,\mathbb{R}^m_+)$, we get
$$\nabla f(\bar z)d\in \mathbb{R}^m\setminus (-\text{int}\,\mathbb{R}^m_+).$$
The proof is complete.
\end{proof}
\begin{lemma} Let $\bar z$ be a locally weak  Pareto solution of the \eqref{problem} and $\Lambda_1(\bar z)$ be
the set of normalized Karush--Kuhn--Tucker  multipliers of the
\eqref{problem} at $\bar z$, that is,
\begin{equation*}
\Lambda_1(\bar z):=\{(\lambda, e^*)\in \mathbb{R}^m_+\times E^*\;|\;
\langle \lambda, \nabla f(\bar z)\rangle+ \nabla G(\bar z)^* e^*=0,
|\lambda|=1, e^*\in N(Q; G(\bar z))\}.
\end{equation*} Then $\Lambda_1(\bar z)$ is a  nonempty bounded and compact set in $\mathbb{R}^m\times E^*$ with
respect to topology $\tau_{\mathbb{R}^m}\times\tau(E^*, E)$, where
$\tau(E^*, E)$ is the weakly star topology in $E^*$.
\end{lemma}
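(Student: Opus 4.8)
The statement bundles three assertions about $\Lambda_1(\bar z)$ — nonemptiness, boundedness, and weak-star compactness — and I would prove them in that order, since the existence of a single multiplier is the analytic heart while the other two are soft consequences of the Robinson condition and the Banach--Alaoglu theorem.

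For nonemptiness the plan is to convert the first-order information of Lemma \ref{firt-order-lemma} into a multiplier by separation. Set $\mathcal{K}:=\{d\in Z\;|\;\nabla G(\bar z)d\in T(Q;G(\bar z))\}$, which is a closed convex cone, being the preimage of the convex cone $T(Q;G(\bar z))$ under the continuous linear map $\nabla G(\bar z)$. The concluding assertion of Lemma \ref{firt-order-lemma} is precisely that $\nabla f(\bar z)(\mathcal{K})\cap(-\mathrm{int}\,\mathbb{R}^m_+)=\emptyset$. Since $\nabla f(\bar z)(\mathcal{K})$ is a convex cone in the finite-dimensional space $\mathbb{R}^m$ and $-\mathrm{int}\,\mathbb{R}^m_+$ is an open convex cone, a separation theorem produces $\lambda\neq 0$ with $\langle\lambda,\nabla f(\bar z)d\rangle\geq 0$ for all $d\in\mathcal{K}$ and $\langle\lambda,w\rangle\leq 0$ for all $w\in-\mathbb{R}^m_+$; the latter forces $\lambda\in\mathbb{R}^m_+$, and I normalize to $|\lambda|=1$. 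Equivalently $-\nabla f(\bar z)^*\lambda$ lies in the polar cone $\mathcal{K}^\circ$. The main obstacle is the final step, producing $e^*$: I would invoke the generalized Farkas identity $\mathcal{K}^\circ=\nabla G(\bar z)^*\big(N(Q;G(\bar z))\big)$, which holds because the surjectivity form of the Robinson constraint qualification, $E=\nabla G(\bar z)Z-\mathrm{cone}(Q-G(\bar z))$ (the characterization from \cite{Zowe} recalled above), forces $\nabla G(\bar z)^*(N(Q;G(\bar z)))$ to be weak-star closed, so that the polar of the preimage equals the adjoint image of $N(Q;G(\bar z))=T(Q;G(\bar z))^\circ$ with no closure needed. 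Applying this to $-\nabla f(\bar z)^*\lambda\in\mathcal{K}^\circ$ yields $e^*\in N(Q;G(\bar z))$ with $\nabla G(\bar z)^*e^*=-\nabla f(\bar z)^*\lambda$, i.e. $\langle\lambda,\nabla f(\bar z)\rangle+\nabla G(\bar z)^*e^*=0$. Showing that the weak-star closure can genuinely be dropped is the delicate point, and it is exactly where the regularity hypothesis is used.

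For boundedness, fix $(\lambda,e^*)\in\Lambda_1(\bar z)$. The stationarity equation gives $\nabla G(\bar z)^*e^*=-\nabla f(\bar z)^*\lambda$, so $\|\nabla G(\bar z)^*e^*\|_{Z^*}\leq\|\nabla f(\bar z)^*\|$ because $|\lambda|=1$. The Robinson condition provides $\rho>0$ with $\rho B_E\subseteq\nabla G(\bar z)B_Z-(Q-G(\bar z))\cap B_E$; for $v\in\rho B_E$ write $v=\nabla G(\bar z)d-w$ with $\|d\|\leq 1$ and $w\in(Q-G(\bar z))\cap B_E$, and use $\langle e^*,w\rangle\leq 0$ (since $e^*\in N(Q;G(\bar z))$) to obtain $\langle e^*,v\rangle\geq\langle\nabla G(\bar z)^*e^*,d\rangle\geq-\|\nabla G(\bar z)^*e^*\|$. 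As $\rho B_E$ is symmetric the same bound applies to $-v$, whence $|\langle e^*,v\rangle|\leq\|\nabla G(\bar z)^*e^*\|$ on $\rho B_E$ and $\|e^*\|\leq\rho^{-1}\|\nabla f(\bar z)^*\|$. Thus $\Lambda_1(\bar z)\subseteq\big(\mathbb{R}^m_+\cap\{|\lambda|=1\}\big)\times r B_{E^*}$ with $r:=\rho^{-1}\|\nabla f(\bar z)^*\|$.

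Finally, for compactness I would observe that this enveloping set is compact in $\tau_{\mathbb{R}^m}\times\tau(E^*,E)$: its first factor is a closed bounded subset of $\mathbb{R}^m$, and $rB_{E^*}$ is weak-star compact by the Banach--Alaoglu theorem. It then suffices to show $\Lambda_1(\bar z)$ is closed in this topology. Taking a net $(\lambda_\alpha,e^*_\alpha)\to(\lambda,e^*)$, the closedness of $\mathbb{R}^m_+$ and continuity of $|\cdot|$ give $\lambda\in\mathbb{R}^m_+$ with $|\lambda|=1$; the weak-star closedness of $N(Q;G(\bar z))$ (an intersection of weak-star closed half-spaces) gives $e^*\in N(Q;G(\bar z))$; and testing the stationarity equation against each fixed $d\in Z$, the identity $\langle\lambda_\alpha,\nabla f(\bar z)d\rangle+\langle e^*_\alpha,\nabla G(\bar z)d\rangle=0$ passes to the limit because $\lambda_\alpha\to\lambda$ in $\mathbb{R}^m$ and $e^*_\alpha\xrightarrow{w^*}e^*$ tested against the fixed vector $\nabla G(\bar z)d\in E$. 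Hence $(\lambda,e^*)\in\Lambda_1(\bar z)$, so $\Lambda_1(\bar z)$ is a closed subset of a compact set and is therefore compact. I would use nets rather than sequences throughout, since $E$ (built from $L^\infty$-type spaces) need not be separable and the weak-star topology on bounded sets need not be metrizable.
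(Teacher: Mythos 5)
Your proof is correct, but the route to nonemptiness is genuinely different from the paper's. The paper separates the origin from a single convex set $\Psi\subset\mathbb{R}^m\times E$ built from all the data at once (the linearized objectives shifted by $r_i\ge 0$, together with $\nabla G(\bar z)d-v$, $v\in T(Q;G(\bar z))$), using the Robinson condition only to give $\Psi$ nonempty interior and to exclude $\lambda=0$ and $e^*=0$ afterwards. You instead separate in $\mathbb{R}^m$ alone --- which yields $\lambda\neq 0$ for free --- and then recover $e^*$ from the duality formula $\bigl(\nabla G(\bar z)^{-1}T(Q;G(\bar z))\bigr)^\circ=\nabla G(\bar z)^*N(Q;G(\bar z))$. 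That identity is precisely the normal-cone formula for $\Sigma=G^{-1}(Q)$ under the Robinson constraint qualification (available in Bonnans--Shapiro, already in the bibliography), so the appeal is legitimate; but note that it is where all the functional-analytic work is concentrated: establishing that $\nabla G(\bar z)^*N(Q;G(\bar z))$ is weak-star closed requires essentially the same boundedness-plus-Alaoglu argument you deploy later, applied to the fibers $\{e^*\in N(Q;G(\bar z))\;|\;\nabla G(\bar z)^*e^*=z^*\}$, and you do not carry this out. The paper's product-space separation keeps the proof self-contained, consistent with its stated aim. Your boundedness estimate is in fact cleaner and sharper than the paper's: you get $\|e^*\|\le\rho^{-1}\|\nabla f(\bar z)^*\|$ directly from $|\lambda|=1$ and the symmetry of $\rho B_E$, whereas the paper fixes a reference multiplier $(\lambda_0,e^*_0)$ and obtains a bound depending on $\|e^*_0\|$. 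The compactness step (Banach--Alaoglu plus weak-star closedness) coincides with the paper's, which leaves the closedness check to the reader; your explicit net argument and the caveat about non-metrizability of the weak-star topology on bounded sets are worthwhile additions.
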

\begin{proof} We first claim that $\Lambda_1(\bar z)$ is nonempty. Indeed, put
\begin{equation*}
\Psi=\{(\nabla f_1(\bar z)d+r_1, \ldots, \nabla f_m(\bar z)d+r_m,
\nabla G(\bar z)d-v)\,|\, d\in Z, v\in T (Q; G(\bar z)), r_i\geq 0,
i\in I\}.
\end{equation*}
Then, $\Psi$ is a convex subset in $\mathbb{R}^m\times E$. By the Robinson constraint qualification, there exists $\rho>0$ such that
\begin{equation}\label{equa-Robinson}
B_E(0, \rho)\subset \nabla G(\bar z)(B_Z)-(Q-G(\bar z))\cap B_E.
\end{equation}
This implies that
\begin{equation*}
B_E(0, \rho)\subset \nabla G(\bar z)(B_Z)-T(Q;G(\bar z))\cap B_E.
\end{equation*}
For each $i\in I$, put
$$\alpha_i=\|\nabla f_i(\bar z)\|=\sup_{d\in B_Z}|\nabla f_i(\bar z)d|.$$
It is easily seen that
\begin{equation*}
(\alpha_1, +\infty)\times\ldots\times (\alpha_m, +\infty)\times B(0, \rho)\subset \Psi.
\end{equation*}
Thus, $\Psi$ has  a nonempty interior. We show that $(0, 0)\notin \text{int}\,\Psi$. If otherwise, there exist $\epsilon_1>0$,  $\ldots$, $\epsilon_m>0$ such that
$$(-\epsilon_1, \epsilon_1)\times \ldots\times (-\epsilon_m, \epsilon_m)\times \{0\}\subset \Psi.$$
This implies that there exist $d\in Z$, $v\in T(Q; G(\bar z))$, and $r_i\geq 0$, $i\in I$ such that
\begin{equation*}\label{equa:1}
\begin{cases}
&\nabla f_i(\bar z)d+r_i<0, \ \ i\in I,
\\
&\nabla G(\bar z)d-v=0.
\end{cases}
\end{equation*}
Consequently, the system
\begin{equation*}
\begin{cases}
&\nabla f_i(\bar z)d<0, \ \ i\in I,
\\
&\nabla G(\bar z)d\in T(Q; G(\bar z)),
\end{cases}
\end{equation*}
has at least one solution $d\in Z$, which contradicts the conclusion
of Lemma \ref{firt-order-lemma}. We now can separate $(0, 0)$ from
$\Psi$ by a hyperplane, i.e., there exists a functional $(\lambda,
e^*)\in (\mathbb{R}^m\times E^*)\setminus\{(0, 0)\}$ such that
\begin{equation}\label{equa:2}
\sum_{i=1}^{m}\lambda_i(\nabla f_i(\bar z)d+r_i) + \langle e^*, \nabla G(\bar z)d-v\rangle\geq 0
\end{equation}
for all $d\in Z$, $v\in T(Q; G(\bar z))$, $r_i\geq 0$, $i\in I$. From \eqref{equa:2} it follows that $\lambda_i\geq 0$ for all $i\in I$, that is,  $\lambda\in \mathbb{R}^m_+$. Putting $v=0$ and $r_i=0$, $i\in I$ into \eqref{equa:2}, we get
\begin{equation*}
(\langle\lambda, \nabla f(\bar z)\rangle+\nabla G(\bar z)^*e^*)d\geq 0 \ \ \forall d\in Z.
\end{equation*}
Thus, $\langle\lambda, \nabla f(\bar z)\rangle+\nabla G(\bar z)^*e^*=0$. Putting this equation and $r_i=0$, $i\in I$, into \eqref{equa:2}, one has
\begin{equation*}
e^*(v)\leq 0 \ \ \forall v\in T(Q; G(\bar z)).
\end{equation*}
Hence, $e^*\in N(Q; G(\bar z))$. We now show that $\lambda\neq 0$. Indeed, if otherwise, then we have
\begin{equation}\label{equa:3}
\langle e^*, \nabla G(\bar z)d-v\rangle\geq 0
\end{equation}
for all $d\in Z$, $v\in T(Q; G(\bar z))$. Again by the Robinson constraint qualification, one has
$$E=\nabla G(\bar z)(Z)-Q(G(\bar z)).$$
This and \eqref{equa:3} imply that $e^*=0$, a contradiction.
Put $(\bar \lambda, \bar e^*)=\left(\frac{\lambda}{|\lambda|},
\frac{e^*}{|\lambda|}\right)$. Then we have $(\bar \lambda, \bar
e^*)\in \Lambda_1(\bar z)$, as required.

We now claim that $\Lambda_1(\bar z)$ is bounded. Indeed, fix $(\lambda_0,
e^*_0)\in \Lambda_1(\bar z)$. Then, for any $(\lambda, e^*)$ belonging to $\Lambda_1(\bar z)$, we have
\begin{equation*}
\begin{cases}
&\langle \lambda_0, \nabla f(\bar z)\rangle+\nabla G(\bar z)^*e^*_0=0, \ \ e^*_0\in N(Q; G(\bar z)),
\\
&\langle \lambda, \nabla f(\bar z)\rangle+\nabla G(\bar z)^*e^*=0, \ \ e^*\in N(Q; G(\bar z)).
\end{cases}
\end{equation*}
Since \eqref{equa-Robinson}, for each $y\in B_E(0, \rho)$, there exist $z\in B_Z$ and $w\in (Q-G(\bar z))\cap B_E$ such that $y=\nabla G(\bar z) z-w$.
It follows that
\begin{align*}
\langle e^*_0-e^*, y\rangle&=\langle e^*_0-e^*, \nabla G(\bar z) z-w\rangle
\\
&=\langle \nabla G(\bar z)^*(e^*_0-e^*), z\rangle-\langle e^*_0-e^*, w\rangle
\\
&=\langle (\lambda-\lambda_0) \nabla f(\bar z), z\rangle-\langle e^*_0, w\rangle+\langle e^*, w\rangle
\\
&\leq 2\|\nabla f(\bar z)\|+\|e^*_0\|.
\end{align*}
Thus,
\begin{align*}
\langle -e^*, y\rangle&\leq 2\|\nabla f(\bar z)\|+\|e^*_0\|+\|e^*_0\|\|y\|
\\
&\leq 2\|\nabla f(\bar z)\|+\|e^*_0\|+\|e^*_0\|\rho.
\end{align*}
Replacing $y$ by $\rho y$ with $\|y\|\leq 1$, we get
$$\|e^*\|\rho\leq 2\|\nabla f(\bar z)\|+\|e^*_0\| (1+\rho).
$$
Consequently,
$$\|e^*\| \leq \frac{2}{\rho}\|\nabla f(\bar z)\|+\frac{1+\rho}{\rho}\|e^*_0\|.
$$
Thus, $\Lambda_1(\bar z)$ is bounded. It is easy to
check that the set $\Lambda_1(\bar z)$ is closed with respect to topology
$\tau_{\mathbb{R}^m}\times\tau(E^*, E)$. Thanks to  \cite[Theorem 3.16]{Brezis1},
$\Lambda_1(\bar z)$  is  compact.
\end{proof}

To derive second-order necessary conditions for the
\eqref{problem}, we need  the following result.

\begin{lemma}\label{abstract-necessary}  Let $\bar z\in \Sigma$ and $d\in \mathcal{C}_1(\bar z)$. If $\bar{z}$ is a locally weak  Pareto solution of the \eqref{problem}, then the following system
 \begin{align}
&\nabla f_i(\bar z)z+\nabla^2f_i(\bar z)d^2< 0, \ \ \forall i\in I(\bar z, d), \label{second-order-abstract-1}
\\
&\nabla G(\bar z)z+\nabla^2G(\bar z)d^2\in T^{2\flat}(Q; G(\bar z), \nabla G(\bar z)d), \label{second-order-abstract-2}
\end{align}
has no solution $z\in Z$.
\end{lemma}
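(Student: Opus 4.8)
The plan is to argue by contradiction. Suppose that, contrary to the assertion, there is a vector $z\in Z$ solving both \eqref{second-order-abstract-1} and \eqref{second-order-abstract-2}. The goal is to build out of the critical direction $d$ and the vector $z$ a feasible second-order arc of the form $z_k=\bar z+t_k d+\tfrac12 t_k^2 w_k$ which converges to $\bar z$ and along which \emph{every} objective $f_i$ strictly decreases; producing such an arc would contradict the local weak Pareto optimality of $\bar z$.

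First I would convert the tangency condition \eqref{second-order-abstract-2}, which lives in the image space through the set $Q$, into a statement about the feasible set $\Sigma=G^{-1}(Q)$. This is the second-order analogue of the identity $\nabla G(\bar z)^{-1}(T(Q;G(\bar z)))=T(\Sigma;\bar z)$ already exploited in the proof of Lemma \ref{firt-order-lemma}. Since $\bar z$ is a regular point and $d\in\mathcal{C}_1(\bar z)$ ensures $\nabla G(\bar z)d\in T(Q;G(\bar z))$, the Robinson constraint qualification provides a chain rule for second-order adjacent tangent sets (see \cite{Bonnans2000}): condition \eqref{second-order-abstract-2} forces $z\in T^{2\flat}(\Sigma;\bar z,d)$. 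By the very definition of this set, for any sequence $t_k\to 0^+$ there exist $w_k\to z$ with $\bar z+t_k d+\tfrac12 t_k^2 w_k\in\Sigma$ for all $k$. Fixing such a sequence yields the desired feasible arc $z_k:=\bar z+t_k d+\tfrac12 t_k^2 w_k$, which converges to $\bar z$ and eventually lies in $B_Z(\bar z,\epsilon)\cap\Sigma$.

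Next I would expand each objective along this arc using the $C^2$ smoothness of $f$. Writing $s_k=t_k d+\tfrac12 t_k^2 w_k$ and noting $\|s_k\|^2=O(t_k^2)$, a Taylor expansion gives
$$
f_i(z_k)-f_i(\bar z)=t_k\nabla f_i(\bar z)d+\tfrac12 t_k^2\big(\nabla f_i(\bar z)w_k+\nabla^2 f_i(\bar z)d^2\big)+o(t_k^2).
$$
For an inactive index $i\notin I(\bar z,d)$, the membership $d\in\mathcal{C}_1(\bar z)$ gives $\nabla f_i(\bar z)d\le 0$, and $i\notin I(\bar z,d)$ upgrades this to $\nabla f_i(\bar z)d<0$; hence the first-order term dominates and $f_i(z_k)-f_i(\bar z)<0$ for all large $k$. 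For an active index $i\in I(\bar z,d)$, one has $\nabla f_i(\bar z)d=0$, so the sign is decided by the second-order bracket; since $w_k\to z$ and \eqref{second-order-abstract-1} gives $\nabla f_i(\bar z)z+\nabla^2 f_i(\bar z)d^2<0$, again $f_i(z_k)-f_i(\bar z)<0$ for all large $k$. Combining both cases, for all sufficiently large $k$ every component of $f(z_k)-f(\bar z)$ is strictly negative, i.e. $f(z_k)-f(\bar z)\in-\mathrm{int}\,\mathbb{R}^m_+$, while $z_k\to\bar z$ and $z_k\in\Sigma$, contradicting the local weak Pareto optimality of $\bar z$.

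I expect the genuinely delicate step to be the passage from \eqref{second-order-abstract-2} to $z\in T^{2\flat}(\Sigma;\bar z,d)$, that is, recovering a true feasible arc in $Z$ from an image-space second-order tangency; all the remaining estimates are routine first- and second-order Taylor expansions together with the case split on $I(\bar z,d)$. This is precisely where the Robinson constraint qualification is indispensable, since without a regularity hypothesis the second-order adjacent tangent set of $\Sigma$ cannot be read off from that of $Q$, and a nonvanishing ``sigma'' curvature term would otherwise have to be carried through the argument.
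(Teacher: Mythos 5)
Your proposal is correct and follows essentially the same route as the paper's proof: both convert \eqref{second-order-abstract-2} into $z\in T^{2\flat}(\Sigma;\bar z,d)$ via the Robinson constraint qualification and the second-order chain rule of Cominetti, extract a feasible arc $\bar z+t_kd+\tfrac12 t_k^2w_k$, and conclude by Taylor expansion with the same case split on $I(\bar z,d)$ versus its complement. No substantive differences.
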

\begin{proof} Arguing by contradiction, assume that the system \eqref{second-order-abstract-1}--\eqref{second-order-abstract-2} admits a solution, say $z$. From \eqref{second-order-abstract-2} it follows that
$$
z\in \nabla G(\bar z)^{-1}\left[T^{2\flat}(Q; G(\bar z), \nabla G(\bar z)d)-\nabla^2 G(\bar z)d^2\right].
$$
By the Robinson constraint qualification and \cite[Theorem 3.1]{cominetti90},  we have
$$
T^{2\flat}(\Sigma; \bar z, d)=\nabla G(\bar
z)^{-1}\left[T^{2\flat}(Q; G(\bar z), \nabla G(\bar z)d)-\nabla^2
G(\bar z)d^2\right].
$$
Thus, $z\in T^{2\flat}(\Sigma; \bar z, d)$. Let $\{t_k\}$ be an
arbitrary sequence converging to $0^+$. Then, there exists a
sequence $\{z_k\}$ tending to $z$ such that
$$w_k:=\bar z+t_kd+\frac{1}{2}t_k^2z_k\in \Sigma \ \  \forall k\in\mathbb{N}.$$
For each $i\in I(\bar z, d)$ and $k\in \mathbb{N}$, we have
\begin{align*}
f_i(w_k)-f_i(\bar z)&=[f_i(w_k)-f_i(\bar z+t_kd)]+[f_i(\bar z+t_kd)-f_i(\bar z)-t_k\langle \nabla f_i(\bar z), d\rangle]
\\
&= \frac{1}{2}t^2_k\langle\nabla f_i(\bar z+ t_k d),
z_k\rangle+\frac{1}{2}t^2_k \nabla^2 f_i(\bar z) d^2+o(t^2_k).
\end{align*}
Therefore,
\begin{equation*}
\lim\limits_{k\to\infty} \frac{f_i(w_k)-f_i(\bar
z)}{\frac{1}{2}t^2_k}=\langle\nabla f_i(\bar z), z\rangle+\nabla^2 f_i(\bar
z)d^2.
\end{equation*}
This and \eqref{second-order-abstract-1} imply that
\begin{equation*}
f_i(w_k)<f_i(\bar z)
\end{equation*}
for all $i\in I(\bar z, d)$ and $k$ large enough. For each $i\in I\setminus I(\bar z, d)$, we have $\langle\nabla f_i(\bar z), d\rangle<0$. From this and the fact that
\begin{equation*}
\lim\limits_{k\to\infty}\frac{f_i(w_k)-f_i(\bar z)}{t_k}=\langle\nabla f_i(\bar z), d\rangle,
\end{equation*}
it follows that
\begin{equation*}
f_i(w_k)<f_i(\bar z)
\end{equation*}
for all $k$ large enough. Thus there exists $k$ large enough such that
\begin{equation*}
f_i(w_k)<f_i(\bar z) \ \ \forall i\in I,
\end{equation*}
which contradicts the fact that $\bar{z}$ is a locally weak  Pareto solution of the \eqref{problem}.
\end{proof}

Problem \eqref{problem} is associated with the Lagrangian $\mathcal{L}_1(z,
\lambda, e^*)=\lambda^T f(z)+ e^* G(z)$. The following theorem gives some
second-order necessary optimality conditions for the \eqref{problem}.

\begin{theorem}\label{second-order-necessary-cond}
 Suppose that $\bar{z}$ is a locally weak  Pareto solution of the \eqref{problem}. Then, for each $d\in \mathcal{C}_{1*}(\bar z)$,
 there exists $(\lambda, e^*)\in\Lambda_1(\bar z)$
 such that the  following non-negative second-order condition is valid:
\begin{equation*}
\nabla^2_{zz}\mathcal{L}_1(\bar z, \lambda, e^*)(d, d)\geq 0.
\end{equation*}
\end{theorem}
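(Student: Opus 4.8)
The plan is to prove the conclusion first for directions in the generating set $\mathcal{C}_{01}(\bar z)$ and then to extend it to the closure $\mathcal{C}_{1*}(\bar z)=\overline{\mathcal{C}_{01}(\bar z)}$ by a compactness argument. Fix $d\in\mathcal{C}_{01}(\bar z)$. Since $\mathrm{cone}\,(Q-G(\bar z))\subset T(Q;G(\bar z))$ we have $\mathcal{C}_{01}(\bar z)\subset\mathcal{C}_1(\bar z)$, so Lemma \ref{abstract-necessary} applies and the system \eqref{second-order-abstract-1}--\eqref{second-order-abstract-2} has no solution. Because $\nabla G(\bar z)d\in\mathrm{cone}\,(Q-G(\bar z))$, writing $\nabla G(\bar z)d=s\,(q-G(\bar z))$ with $q\in Q$ and forming suitable convex combinations shows that $0\in T^{2\flat}(Q;G(\bar z),\nabla G(\bar z)d)$ and, more precisely, that $T(Q;G(\bar z))$ lies in the recession cone of the convex set $T^{2\flat}(Q;G(\bar z),\nabla G(\bar z)d)$; in particular $T(Q;G(\bar z))\subset T^{2\flat}(Q;G(\bar z),\nabla G(\bar z)d)$. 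This inclusion is precisely what will allow the Robinson constraint qualification to be imported into the second-order setting.

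Next I would reproduce, at second order, the separation argument used to prove that $\Lambda_1(\bar z)$ is nonempty. Consider the set
\begin{align*}
\Psi_d=\big\{\big(&(\nabla f_i(\bar z)z+\nabla^2 f_i(\bar z)d^2+r_i)_{i\in I(\bar z,d)},\\
&\ \nabla G(\bar z)z+\nabla^2 G(\bar z)d^2-w\big)\ :\ z\in Z,\ r_i\ge 0,\ w\in T^{2\flat}(Q;G(\bar z),\nabla G(\bar z)d)\big\}
\end{align*}
in $\mathbb{R}^{|I(\bar z,d)|}\times E$, which is convex because $T^{2\flat}(Q;G(\bar z),\nabla G(\bar z)d)$ is convex. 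Using the equivalent form $E=\nabla G(\bar z)Z-\mathrm{cone}\,(Q-G(\bar z))$ of the Robinson constraint qualification together with $T(Q;G(\bar z))\subset T^{2\flat}(Q;G(\bar z),\nabla G(\bar z)d)$, one checks exactly as in the nonemptiness proof for $\Lambda_1(\bar z)$ that $\Psi_d$ contains a set of the form $\prod_{i}(\beta_i,+\infty)\times B_E(\nabla^2 G(\bar z)d^2,\rho)$, hence has nonempty interior. On the other hand, if $(0,0)$ were interior to $\Psi_d$, then $\Psi_d$ would contain a point with strictly negative $\mathbb{R}^{|I(\bar z,d)|}$-components and vanishing $E$-component, that is, a solution of \eqref{second-order-abstract-1}--\eqref{second-order-abstract-2}, contradicting Lemma \ref{abstract-necessary}. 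Thus $(0,0)\notin\mathrm{int}\,\Psi_d$, and a separation theorem yields $(\mu,e^*)\in(\mathbb{R}^{|I(\bar z,d)|}\times E^*)\setminus\{0\}$ with
\begin{equation*}
\sum_{i\in I(\bar z,d)}\mu_i\big(\nabla f_i(\bar z)z+\nabla^2 f_i(\bar z)d^2+r_i\big)+\big\langle e^*,\nabla G(\bar z)z+\nabla^2 G(\bar z)d^2-w\big\rangle\ge 0
\end{equation*}
for all admissible $z,r_i,w$. Establishing this nonempty interior is the first place I expect real work, since it is exactly where the first-order qualification has to be transferred to the second-order cone.

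From this inequality I would read off the assertions in the usual order. Letting $r_i\to+\infty$ forces $\mu_i\ge 0$; letting $z$ range over $Z$ forces the linear-in-$z$ part to vanish, and extending $\mu$ by zeros to a vector $\lambda\in\mathbb{R}^m_+$ yields the stationarity relation $\langle\lambda,\nabla f(\bar z)\rangle+\nabla G(\bar z)^*e^*=0$. Setting $r_i=0$ leaves $\langle e^*,w\rangle\le\nabla^2_{zz}\mathcal{L}_1(\bar z,\lambda,e^*)(d,d)$ for all $w\in T^{2\flat}(Q;G(\bar z),\nabla G(\bar z)d)$; since $T(Q;G(\bar z))$ is a recession direction of $T^{2\flat}$ and $0\in T^{2\flat}$, this gives $\langle e^*,v\rangle\le 0$ for every $v\in T(Q;G(\bar z))$, i.e.\ $e^*\in N(Q;G(\bar z))$, while the choice $w=0$ gives $\nabla^2_{zz}\mathcal{L}_1(\bar z,\lambda,e^*)(d,d)\ge 0$. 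Finally $\lambda\ne 0$, for otherwise $e^*\ne 0$, $\nabla G(\bar z)^*e^*=0$ and $e^*\in N(Q;G(\bar z))$, contradicting the Robinson constraint qualification exactly as in the nonemptiness proof. Normalizing $(\lambda,e^*)$ places it in $\Lambda_1(\bar z)$ and preserves the inequality, settling the case $d\in\mathcal{C}_{01}(\bar z)$.

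For general $d\in\mathcal{C}_{1*}(\bar z)$, I would choose $d_n\in\mathcal{C}_{01}(\bar z)$ with $d_n\to d$ and corresponding $(\lambda_n,e^*_n)\in\Lambda_1(\bar z)$ satisfying $\nabla^2_{zz}\mathcal{L}_1(\bar z,\lambda_n,e^*_n)(d_n,d_n)\ge 0$. Since $\Lambda_1(\bar z)$ is bounded and compact in $\tau_{\mathbb{R}^m}\times\tau(E^*,E)$, pass to a convergent subnet $(\lambda_n,e^*_n)\to(\lambda,e^*)\in\Lambda_1(\bar z)$. Because $\nabla^2 f_i(\bar z)(d_n,d_n)\to\nabla^2 f_i(\bar z)(d,d)$ and $\nabla^2 G(\bar z)(d_n,d_n)\to\nabla^2 G(\bar z)(d,d)$ strongly, pairing the weak-$*$ convergent $e^*_n$ with the strongly convergent vectors $\nabla^2 G(\bar z)(d_n,d_n)$ and invoking the uniform bound on $\|e^*_n\|$ lets me pass to the limit and conclude $\nabla^2_{zz}\mathcal{L}_1(\bar z,\lambda,e^*)(d,d)\ge 0$. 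This limiting step is the second delicate point: only weak-$*$ convergence of the multipliers is available, so it must be combined carefully with the strong convergence of the second-order terms and the boundedness of $\Lambda_1(\bar z)$.
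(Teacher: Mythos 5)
Your proposal is correct and follows essentially the same route as the paper: separate the origin from a convex set built from the second-order expansions of $f$ and $G$ over $T^{2\flat}(Q;G(\bar z),\nabla G(\bar z)d)$, rule out interiority via Lemma \ref{abstract-necessary}, obtain nonempty interior and $\lambda\neq 0$ from the Robinson constraint qualification, and pass to the closure $\mathcal{C}_{1*}(\bar z)$ using the compactness of $\Lambda_1(\bar z)$ in $\tau_{\mathbb{R}^m}\times\tau(E^*,E)$. The only (harmless) deviations are that you justify $E=\nabla G(\bar z)Z-T^{2\flat}(Q;G(\bar z),\nabla G(\bar z)d)$ through a direct recession-cone inclusion $T(Q;G(\bar z))\subset T^{2\flat}(Q;G(\bar z),\nabla G(\bar z)d)$ where the paper invokes Cominetti's identity $T^{2\flat}(Q;y,h)=T^{\flat}(T^{\flat}(Q;y),h)$, and that you extract a convergent subnet of multipliers where the paper uses continuity of the associated sup-function; your recession argument also makes the verification of $e^*\in N(Q;G(\bar z))$ more explicit than the paper does.
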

\begin{proof} We first prove the theorem for  $d\in \mathcal{C}_{01}(\bar z)$.
We claim that  $0\in T^{2\flat}(Q; G(\bar z), \nabla G(\bar z)d)$. Indeed, since $\nabla G(\bar z)d\in \mathrm{cone}\, (Q-G(\bar z))$, there exists  $\mu>0$ such that $\mu \nabla G(\bar z)d\in Q-G(\bar z)$. From $0\in Q-G(\bar z)$ and the convexity of $Q-G(\bar z)$, for any $0<\alpha<\mu$, we get
\begin{equation*}
\frac{\alpha}{\mu}\mu \nabla G(\bar z)d +\left(1-\frac{\alpha}{\mu}\right).0\in Q-G(\bar z).
\end{equation*}
This implies that $G(\bar z)+\alpha\nabla G(\bar z) d\in Q$ for all
$\alpha\in (0, \mu)$. Hence, $0\in T^{2\flat}(Q; G(\bar z), \nabla
G(\bar z)d)$ as required.   We consider the following set
\begin{align*}
\Pi:=&\big\{(\nabla^2f_1(\bar z)d^2+\nabla f_1(\bar z)z+r_1,...,
\nabla^2f_m(\bar z)d^2+\nabla f_m(\bar z)z+r_m, \nabla^2G(\bar
z)d^2+\nabla G(\bar z)z-v)|
\\
 &z\in Z, v\in T^{2\flat}(Q; G(\bar z), \nabla G(\bar z)d), r_i\geq 0, i\in I\big\}.
\end{align*}
Thanks to Lemma \ref{firt-order-lemma}, the set $I(\bar z, d)$ is nonempty.
Thus, by choosing $\lambda_i=0$ and removing the component $i$-th of
$\Pi$ for $i\in I\setminus I(\bar z, d)$, we may assume that
$I(\bar z, d)=I$. We claim that $\Pi$ is a convex set with a nonempty interior and $(0, 0)\notin\mathrm{int}\, \Pi$. The convexity of
$\Pi$ follows directly from the convexity of $T^{2\flat}(Q; G(\bar z),
\nabla G(\bar z)d)$ and $\mathbb{R}^m_+$. By the Robinson constraint qualification, there exists $\rho>0$ such that
\begin{equation*}
B_E(0, \rho)\subset \nabla G(\bar z)[B_Z]-T^{\flat}(Q; G(\bar z))\cap B_E.
\end{equation*}
Since $0 \in T^{2\flat}(Q; G(\bar z), \nabla G(\bar z)d)$ and \cite[Proposition 3.1]{cominetti90}, we have
\begin{equation*}
T^{2\flat}(Q; G(\bar z), \nabla G(\bar z))=T^{\flat}(T^\flat(Q; G(\bar z)), \nabla G(\bar z)).
\end{equation*}
Putting $V=\nabla G(\bar z)+B_E(0, \rho)$ and $\rho_1=1+\|\nabla
G(\bar z)d\|$, we have
\begin{align*}
V&\subset \nabla G(\bar z)[B_Z]-\left[T^{\flat}(Q;G(\bar z))\cap B_E-\nabla G(\bar z)\right]
\\
&\subset\nabla G(\bar z)[B_Z]-\left[T^{\flat}(Q; G(\bar z))-\nabla G(\bar z)\right]\cap B_E(0, \rho_1)
\\
&\subset\nabla G(\bar z)[B_Z]-T^{\flat}(T^\flat(Q; G(\bar z)), \nabla G(\bar z))\cap B_E(0, \rho_1)
\\
&=\nabla G(\bar z)[B_Z]-T^{2\flat}(Q; G(\bar z), \nabla G(\bar z))\cap B_E(0, \rho_1).
\end{align*}
By the Robinson constraint qualification,
\begin{equation}\label{sujective_property}
E=\nabla G(\bar z)[Z]-Q(G(\bar z))=\nabla G(\bar z)[Z]-T^{2\flat}(Q; G(\bar z), \nabla G(\bar z)d).
\end{equation}
For each $i\in I$, we put
$$\alpha_i=\nabla^2 f_i(\bar z)d^2+\sup \left\{\nabla f_i(\bar z)z\,|\, z\in  B_Z(0,\rho_1)\right\}<+\infty.$$
We then have
$$
(\alpha_1, +\infty)\times (\alpha_2, +\infty)\times \ldots \times(\alpha_m, +\infty)\times \hat V\subset \Pi,
$$
where $\hat V:=\nabla^2G(\bar z)d^2+V$. This implies that the interior of
$\Pi$ is nonempty. We now show that $(0, 0)\notin\mathrm{int}\, \Pi$. If
otherwise, there exist $\epsilon_1>0$, $\epsilon_2>0$, $\ldots$,
$\epsilon_m>0$ such that
$$
(-\epsilon_1, \epsilon_1)\times(-\epsilon_2, \epsilon_2)\times\ldots\times (-\epsilon_m, \epsilon_m)\times \{0\}\subset \Pi.
$$
This implies that there exist $z\in Z$, $v\in T^{2\flat}(Q; G(\bar z), \nabla G(\bar z)d)$, $r_i\geq 0$, $i\in I$, satisfying
$$
\begin{cases}
\nabla^2f_i(\bar z)d^2+\nabla f_i(\bar z)z+r_i<0, \ \ i\in I,
\\
\nabla^2G(\bar z)d^2+\nabla G(\bar z)z-v=0.
\end{cases}
$$
Consequently, $z$ is a solution of the following system
$$
\begin{cases}
\nabla^2f_i(\bar z)d^2+\nabla f_i(\bar z)z<0, \ \ i\in I,
\\
\nabla^2G(\bar z)d^2+\nabla G(\bar z)z\in T^{2\flat}(Q; G(\bar z), \nabla G(\bar z)d),
\end{cases}
$$
contrary to Lemma \ref{abstract-necessary}. Since  $(0, 0)\notin\mathrm{int}\, \Pi$, we can separate $(0, 0)$ from $\Pi$ by a
hyperplane, i.e., there exists a functional $(\lambda, e^*)\in
(\mathbb{R}^m\times E^*)\setminus\{(0,0)\}$ such that
\begin{equation}\label{separate-property}
\sum_{i=1}^{m}\lambda_i(\nabla^2f_i(\bar z)d^2+\nabla f_i(\bar z)z+r_i)+\langle e^*, \nabla^2G(\bar z)d^2+\nabla G(\bar z)z-v\rangle\geq 0
\end{equation}
for all $r_i\geq 0$, $i\in I$, $z\in Z$, and $v\in T^{2\flat}(Q; G(\bar z), \nabla G(\bar z)d)$. By \eqref{separate-property}, we have $\lambda_i\geq 0$ for all $i\in I$, i.e., $\lambda\in\mathbb{R}^m_+$. We claim that $\lambda$ is a nonzero vector. If otherwise, then we have
$$\langle e^*, \nabla^2 G(\bar z)d^2+\nabla G(\bar z)z-v\rangle\geq 0$$
for all $z\in Z$, and $v\in T^{2\flat}(Q; G(\bar z), \nabla G(\bar z)d)$, or, equivalently,
$$\langle e^*, \nabla^2G(\bar z)d^2+w\rangle\geq 0$$
for all $w\in \nabla G(\bar z)(Z)-T^{2\flat}(Q; G(\bar z), \nabla
G(\bar z)d)$. This and \eqref{sujective_property} imply that
$e^*=0$, contrary to the fact that $(\lambda, e^*)\neq (0, 0)$. We
now rewrite \eqref{separate-property} as follows
\begin{equation}\label{separate-property-2}
\langle\nabla_z\mathcal{L}_1(\bar z, \lambda, e^*) , z\rangle +
\nabla_{zz}^2\mathcal{L}_1(\bar z, \lambda, e^*)(d, d)
+\sum_{i=1}^{m}\lambda_ir_i-\langle e^*, v\rangle\geq 0
\end{equation}
for all $r_i\geq 0$, $i\in I$, $z\in Z$, and $v\in T^{2\flat}(Q;
G(\bar z), \nabla G(\bar z)d)$. It follows that
$\nabla_z\mathcal{L}_1(\bar z, \lambda, e^*) =0$.
 Putting $\nabla_z\mathcal{L}_1(\bar z, \lambda, e^*) =0$ and $r_i=0$, $i\in I$, into \eqref{separate-property-2},
  we get
\begin{equation*}
\nabla^2_{zz}\mathcal{L}_1(\bar z, \lambda, e^*)(d, d) \geq \langle
e^*, v\rangle
\end{equation*}
for all $v\in T^{2\flat}(Q; G(\bar z), \nabla G(\bar z)d)$. Thus,
\begin{equation}\label{SCOC2}
\nabla^2_{zz} \mathcal{L}_1(\bar z, \lambda, e^*)(d, d)\geq
\sigma(e^*, T^{2\flat}(Q; G(\bar z), \nabla G(\bar z)d))\geq 0.
\end{equation}
By dividing both sides of \eqref{SCOC2} by $|\lambda|$, we obtain that
\begin{equation*}\label{SCOC3}
\sup_{(\lambda, e^*)\in\Lambda_1(\bar z)}\nabla_{zz}^2
\mathcal{L}(\bar z, \lambda, e^*)(d, d)\geq 0.
\end{equation*}

We now take any $d\in \mathcal{C}_{1*}(\bar z)$. Then, there exists
a sequence $\{d_k\}\subset \mathcal{C}_{01}(\bar z)$ converging to
$d$. From what has already been proved, we have
 \begin{equation}\label{SCOC4}
\sup_{(\lambda, e^*)\in\Lambda_1(\bar z)}\nabla_{zz}^2
\mathcal{L}(\bar z, \lambda, e^*)(d_k, d_k) \geq 0.
\end{equation} Since the set $\Lambda_1(\bar z)$ is compact in
topology $\tau_{\mathbb{R}^m}\times \tau(E^*, E)$, the function
$$
\psi (d):=\sup_{(\lambda, e^*)\in\Lambda_1(\bar z)}\nabla_{zz}^2
\mathcal{L}_1(\bar z, \lambda, e^*)(d, d)
$$ is continuous. Letting $k\to\infty$ in \eqref{SCOC4}, we get
\begin{equation*}
\sup_{(\lambda, e^*)\in\Lambda_1(\bar z)}\nabla_{zz}^2
\mathcal{L}_1(\bar z, \lambda, e^*)(d, d) \geq 0.
\end{equation*} Again, by the compactness of $\Lambda_1(\bar z)$, there exists $(\lambda, e^*)\in\Lambda_1(\bar z)$ such that
\begin{equation*}
\nabla_{zz}^2 \mathcal{L}_1(\bar z, \lambda, e^*)(d, d) \geq 0.
\end{equation*} The proof is complete.
\end{proof}

From Theorem 3.1 we want to ask whether the conclusion is still true
if $\mathcal{C}_{1*}(\bar z)$ is replaced by $\mathcal{C}_1(\bar
z)$. Clearly, $\mathcal{C}_{1*}(\bar z)\subseteq \mathcal{C}_1(\bar
z)$. In the case of single-objective $(m=1)$ under assumptions that $Q$
is {\em polyhedric} at $G(\bar z)$ and $\nabla G(\bar z)$ is
surjective, \cite[Proposition 3.54]{Bonnans2000} showed that
$\mathcal{C}_{1}(\bar z)=\mathcal{C}_{1*}(\bar z)$.  However, when
$m>1$, the proof of Proposition 3.54 in \cite{Bonnans2000} is
collapsed. The reason is that the condition $\langle \lambda, \nabla f(\bar z)\rangle=0$ with $\lambda\neq 0$ does not imply $\nabla f(\bar z)=0$. We do not know whether the equality
$\mathcal{C}_1(\bar z)= \mathcal{C}_{1*}(\bar z)$ is valid. Therefore, we
leave here the following conjecture.

\noindent $\bullet$ {\bf Conjecture}: {\em Suppose that $\nabla
G(\bar z)\colon Z\to E$ is surjective and $Q$ is polyhedric at
$G(\bar z)$. If $\bar z$ is a locally weak Pareto solution of
the \eqref{problem}, then  $\mathcal{C}_1(\bar z)= \mathcal{C}_{1*}(\bar
z)$.}

\section{Abstract multi-objective optimal control problems}
\label{abstract-MCP}
Let $E_0, E, X$ and $ U$  be Banach spaces and $Q$ be a nonempty
closed convex set in $E$. Define $Z=X\times U$ and assume that
\begin{align*}
&I \colon X\times U\to \mathbb{R}^m,\\
&F \colon X\times U \to E_0,\\
&G \colon X\times U\to E
\end{align*}
are given mappings.  We consider the following
multi-objective optimal control problem of finding a control $u\in
U$ and the corresponding state $x\in X$ which solve
\begin{align}
&\textrm{Min\,}_{\mathbb{R}^m_+}\quad  I(x, u),\notag  \\
&\textrm{s.t.}\quad\quad   F(x, u)=0, \tag{MP2} \label{problem2} \\
&\quad\quad\quad G(x, u)\in Q.\notag
\end{align} We denote by $\Phi$ the feasible of the \eqref{problem2} and put
$$
D=\{(x, u)\in Z\;|\; F(x, u)=0\}.
$$
Fix $z_0=(y_0, u_0)\in
 \Phi$. We denoted by $\Lambda_2(z_0)$ the set of multipliers $(\lambda, v^*,
e^*)\in \mathbb{R}^m_+\times E_0^*\times E^*$ with  $|\lambda|=1$,
which satisfies the following conditions
\begin{equation*}
\nabla_z\mathcal{L}_2(z, \lambda, v^*, e^*)=0,\ e^* \in N(Q, G(z,
w_0)),
\end{equation*} where $\mathcal{L}_2(z, \lambda, v^*, e^*)$ is the
Lagrangian which is given by
$$
\mathcal{L}_2(z, v^*, e^*)= \langle\lambda, I(z)\rangle+\langle v^*,
F(z)\rangle + \langle e^*, G(z)\rangle.
$$ We also denote by $\mathcal{C}_2(z_0)$ the closure of ${\mathcal C}_{02}(z_0)$
in $Z$, where
\begin{equation}\notag
{\mathcal C}_{02}(z_0):=\left\{d\in Z\;|\; \nabla I(z_0)d\in -\mathbb{R}^m_+,\ \nabla
F(z_0)d=0,\  \nabla G(z_0)d\in {\rm cone}(Q- G(z_0)\right\}.
\end{equation}
The set $\mathcal{C}_2(z_0)$ is called the critical cone of the \eqref{problem2} at $z_0$.

\medskip

Let us introduce the following assumptions:
\begin{enumerate}
    \item [$(A1)$] There exist positive numbers $r_1, r_1'$ such that
    the mapping $I(\cdot, \cdot)$, $F(\cdot, \cdot)$ and $G(\cdot,
    \cdot)$ are twice continuously Fr\'{e}chet differentiable on
    $B_X(x_0, r_1)\times B_U(u_0, r_1')$;
    \item [$(A2)$] The mapping $F_x(z_0)$ is  bijective;
    \item [$(A3)$] $\nabla G(z_0)(T(D; z_0))=E$.
\end{enumerate}

\medskip

From assumptions $(A1)$ and $(A3)$,  we have that $F(\cdot, \cdot)$ is
continuously differentiable on $B_X(x_0, r_1)\times B_U(u_0, r_1')$
and $F_x(z_0)$ is bijective. By the implicit function theorem (see
\cite[Theorem 4.E]{Zeidler1}), there exist balls $B_X(x_0, r_2)$,
$B_U(u_0, r_2')$ with $r_2<r_1, r_2'<r_1'$ such that for each $u\in
B_U(u_0, r_2')$, the equation
$$
F(x, u)=0
$$ has a unique solution $x=\zeta(u)\in B_X(x_0, r_2)$. Moreover,
the mapping
$$\zeta \colon B_U(u_0, r_2')\to B_X(x_0,
r_2)$$ is  of class $C^2$ and $\zeta(u_0)=x_0$. Thus,
\begin{equation}\label{EqF}
F(\zeta(u), u)=0\ \ \forall u\in B_U(u_0, r_2').
\end{equation}
We now define the following mappings:
\begin{align}
& J \colon U\to\mathbb{R}^m, \quad J(u):=J(\zeta(u), u), \notag \\
& H \colon U\to E,\, \quad H(u):= G(\zeta(u), u).\label{H}
\end{align}  Then we can show that  $(x_0, u_0)$ is a locally weak  Pareto solution of the \eqref{problem2} if
and only if $u_0$ is a locally weak  Pareto solution of the following
problem:
\begin{align}
&\textrm{Min\,}_{\mathbb{R}^m_+} J(u)\, \tag{MP3}\label{problem3}\\
& \textrm{s.t.}\ H(u)\in Q.\notag
\end{align} Problem \eqref{problem3} is associated with the Lagrangian
$$
\mathcal{L}_3(u,\lambda, e^*)=\lambda J(u) +e^*H(u).
$$ Given a feasible point $u_0$ of the \eqref{problem3}, we define
\begin{equation*}
\mathcal{C}_{03}(u_0)=\{u\in U \;|\; \nabla J(u_0)u\in -\mathbb{R}^m_+, \nabla
H(u_0)u\in {\rm cone}(Q- H(u_0))\}
\end{equation*}
and $\mathcal{C}_3(u_0)=\overline{\mathcal{C}_{03}(u_0)}$ the interior
critical cone and the critical cone at $u_0$, respectively.

The following theorem provides second-order necessary optimality
conditions for the \eqref{problem2}.

\begin{theorem}\label{ThSOC2}
Suppose that $z_0$ is a feasible point of the  \eqref{problem2} and assumptions
$(A1)-(A3)$ are satisfied. If $z_0$ is a locally weak  Pareto solution of the
\eqref{problem2}, then, for each $d \in \mathcal{C}_2(z_0)$, there exists a
nonzero triple $(\lambda, v^*, e^*) \in \Lambda_2(z_0)$ such that
\begin{equation*}
\nabla_z^2\mathcal{L}_2(z_0, e^*, v^*)(d,d)=\langle \lambda
J_{zz}(z_0)d, d\rangle +\langle v^*F_{zz}(z_0)d, d\rangle +\langle
e^*G_{zz}(z_0)d, d\rangle \geq 0.
\end{equation*}
\end{theorem}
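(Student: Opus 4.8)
The plan is to eliminate the equality constraint $F(x,u)=0$ through the implicit-function reduction already set up before the statement, apply the abstract second-order necessary condition of Theorem \ref{second-order-necessary-cond} to the reduced problem \eqref{problem3}, and then lift the resulting multiplier back to a triple for \eqref{problem2}. Recall that the excerpt produces a $C^2$ map $\zeta$ with $F(\zeta(u),u)\equiv 0$ near $u_0$, $\zeta(u_0)=x_0$, and $J(u)=I(\zeta(u),u)$, $H(u)=G(\zeta(u),u)$, and that $z_0$ is a locally weak Pareto solution of \eqref{problem2} if and only if $u_0$ is one of \eqref{problem3}. Differentiating $F(\zeta(u),u)=0$ at $u_0$ and using bijectivity of $F_x(z_0)$ from $(A2)$ gives $\zeta'(u_0)=-F_x(z_0)^{-1}F_u(z_0)$, so by the chain rule $\nabla J(u_0)=I_x(z_0)\zeta'(u_0)+I_u(z_0)$ and $\nabla H(u_0)=G_x(z_0)\zeta'(u_0)+G_u(z_0)$.

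First I would check that \eqref{problem3} satisfies the Robinson constraint qualification at $u_0$. Since $F_x(z_0)$ is bijective, $D$ is locally the graph of $\zeta$, hence $T(D;z_0)=\ker\nabla F(z_0)=\{(\zeta'(u_0)u,u)\mid u\in U\}$; applying $\nabla G(z_0)$ to such an element returns exactly $\nabla H(u_0)u$, so $\nabla G(z_0)(T(D;z_0))=\nabla H(u_0)(U)$. By $(A3)$ this equals $E$, so $\nabla H(u_0)$ is surjective and the qualification $E=\nabla H(u_0)U-\mathrm{cone}(Q-H(u_0))$ holds trivially. Thus Theorem \ref{second-order-necessary-cond} applies to \eqref{problem3}, regarded as an instance of \eqref{problem}.

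Next I would match critical directions. For $d=(d_x,d_u)\in\mathcal{C}_2(z_0)$ the condition $\nabla F(z_0)d=0$ forces $d_x=\zeta'(u_0)d_u$, so the bounded map $u\mapsto(\zeta'(u_0)u,u)$ is a homeomorphism of $U$ onto the closed subspace $\ker\nabla F(z_0)$ carrying $\mathcal{C}_{03}(u_0)$ onto $\mathcal{C}_{02}(z_0)$ and, on taking closures, $\mathcal{C}_3(u_0)$ onto $\mathcal{C}_2(z_0)$; set $d'=d_u\in\mathcal{C}_3(u_0)$. Theorem \ref{second-order-necessary-cond} then yields $(\lambda,e^*)$ with $\lambda\in\mathbb{R}^m_+$, $|\lambda|=1$, $e^*\in N(Q;H(u_0))=N(Q;G(z_0))$, the stationarity $\lambda^T\nabla J(u_0)+\nabla H(u_0)^*e^*=0$, and $\nabla^2_{uu}\mathcal{L}_3(u_0,\lambda,e^*)(d',d')\ge 0$.

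It remains to produce $v^*$ and transfer the inequality, which is the technical heart. Since $F_x(z_0)^*$ is bijective, I would let $v^*\in E_0^*$ be the unique solution of $F_x(z_0)^*v^*=-\lambda^TI_x(z_0)-e^*G_x(z_0)$, i.e.\ exactly $\nabla_x\mathcal{L}_2(z_0,\lambda,v^*,e^*)=0$; substituting $\lambda^TI_x(z_0)+e^*G_x(z_0)=-F_x(z_0)^*v^*$ into the reduced stationarity and using $F_x(z_0)\zeta'(u_0)=-F_u(z_0)$ gives $\lambda^TI_u(z_0)+v^*F_u(z_0)+e^*G_u(z_0)=0$, so $\nabla_u\mathcal{L}_2(z_0)=0$ and hence $(\lambda,v^*,e^*)\in\Lambda_2(z_0)$ is a nonzero triple as $|\lambda|=1$. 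Finally, because $\langle v^*,F(\zeta(u),u)\rangle\equiv 0$ by \eqref{EqF}, one has $\mathcal{L}_3(u,\lambda,e^*)=\mathcal{L}_2((\zeta(u),u),\lambda,v^*,e^*)$ near $u_0$; differentiating this composition twice at $u_0$ in the direction $d'$ and invoking that the first-order term $\nabla_z\mathcal{L}_2(z_0)=0$ annihilates the curvature contribution of $\Gamma(u)=(\zeta(u),u)$, I would obtain the identity $\nabla^2_{uu}\mathcal{L}_3(u_0,\lambda,e^*)(d',d')=\nabla^2_{zz}\mathcal{L}_2(z_0,\lambda,v^*,e^*)(d,d)$, which combined with the previous inequality proves the claim. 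The main obstacle is precisely this Hessian transfer: one must see that the term $\langle v^*,F\rangle$, though identically zero along the graph of $\zeta$, is exactly what absorbs the second-order curvature of $\zeta$, so that the vanishing of $\nabla_z\mathcal{L}_2(z_0)$ collapses the chain-rule remainder and produces a genuine no-gap identity rather than an inequality with a leftover curvature term.
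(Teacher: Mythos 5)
Your proposal is correct and follows essentially the same route as the paper: reduction via the implicit function $\zeta$, verification of the Robinson constraint qualification for \eqref{problem3} from $(A2)$--$(A3)$, application of Theorem \ref{second-order-necessary-cond}, construction of $v^*=-(F_x^*(z_0))^{-1}(\lambda I_x(z_0)+e^*G_x(z_0))$, and transfer of the Hessian using the twice-differentiated identity $F(\zeta(u),u)=0$. Your packaging of the final step as a chain rule for $\mathcal{L}_2\circ\Gamma$ with the curvature term killed by $\nabla_z\mathcal{L}_2(z_0)=0$ is a cleaner statement of exactly the computation the paper carries out explicitly.
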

\begin{proof} Since $z_0=(x_0, u_0)$ is a locally weak  Pareto solution to the
\eqref{problem2}, $u_0$  is a locally weak  Pareto solution of the \eqref{problem3}.  By assumption $(A2)$,
$\nabla F(z_0)$ is surjective. Indeed, for any $v\in E_0$, there exists $x\in X$ such that $F_x(\bar z)x=v$.  Hence, $(x, 0)\in X\times U$ and $\nabla F(\bar z)(x, 0)=v$. This means that $\nabla F(\bar z)$ is surjective. From this and \cite[Lemma 2.2]{kien2}, it follows that
$$
T(D; z_0)=\left\{(x, u)\in Z \;|\; F_x(z_0)x+F_u(z_0)u=0\right\}=\left\{(\zeta'(u_0)u,
u) \;|\; u\in U\right\}.
$$
Combining this with $(A3)$, we get
\begin{align*}
E&\subseteq\{\nabla_xG(z_0)\zeta'(u_0)u +\nabla_uG(z_0)u \;|\; u\in
U\}\\
&\subseteq \nabla H(u_0)(U) -{\rm cone}(Q-H(u_0)),
\end{align*}
where $H$ is defined by \eqref{H}. Hence the Robinson constraint qualification for the \eqref{problem3} is satisfied
at $u_0$.

Fix any $d=(x, u)\in \mathcal{C}_2(z_0)$. Then there exists a
sequence $\{d_k\}=\{(x_k, u_k)\}\subset \mathcal{C}_{02}(z_0)$ such that $d_k\to
d$.  Since $F_x(z_0)x_k+ F_u(z_0)u_k=0,$ we have
$x_k=\zeta'(u_0)u_k$ and so $u_k\in \mathcal{C}_{03}(u_0)$.
Consequently, $u\in \mathcal{C}_3(u_0)$.  By Theorem \ref{second-order-necessary-cond}, there exists
a multipliers $(\lambda, e^*)\in \Lambda_*(u_0)$ such that the
following conditions hold:
\begin{enumerate}
    \item [(a)] $\lambda\nabla J(u_0) +\nabla e^*H(u_0)=0$, $e^*\in
    N(Q, H(u_0))$,

    \item [(b)] $\langle \lambda \nabla^2 J(u_0)u, u\rangle
    +\langle e^*\nabla^2H(u_0)u,  u\rangle\geq 0$.
\end{enumerate}

Note that from \eqref{EqF}, we have
\begin{equation}\label{EqF2}
 F(\zeta(v), v)=0\ \ \forall v\in B_U(u_0, r_3').
\end{equation}
Taking first-order derivative on both sides, we get
\begin{equation*}
F_x(z_0)\zeta'(u_0) +F_u(z_0)=0
\end{equation*} and so
\begin{equation}\label{Adj1}
\zeta'(u_0)^*=-F_u(z_0)^* (F_x^*(z_0))^{-1}.
\end{equation}
From (a), we have
$$
 \lambda I_x(z_0)\zeta'(u_0)+ \lambda I_u(z_0) + e^*G_x(z_0)\zeta'(u_0) +
e^*G_u(z_0)=0,
$$
or, equivalently,
\begin{equation}\label{Adj2}
 \zeta'(u_0)^*(\lambda I_x(z_0)+e^*G_x(z_0))=-(\lambda I_u(z_0)+ e^* G_u(z_0)).
\end{equation} Let us put
\begin{equation}\label{Mulp3}
\phi =(\lambda I_x(z_0)+e^*G_x(z_0)),\ v^*=-(F_x^*(z_0))^{-1}\phi.
\end{equation}
Then, from \eqref{Adj1} and \eqref{Adj2}, we have
\begin{equation*}
-F_u(z_0)^* (F_x^*(z_0))^{-1}\phi=-(\lambda I_u(z_0)+ e^* G_u(z_0)).
\end{equation*}
Consequently,
\begin{equation*}
\begin{cases}
&F_u(z_0)^* v^* +\lambda I_u(z_0)+ e^* G_u(z_0)=0\\
&F_x(z_0)^* v^*=-(\lambda I_x(z_0)+e^*G_x(z_0)).
\end{cases}
\end{equation*}
This is equivalent to
\begin{equation*}
\nabla I(z_0)^*\lambda^T+ \nabla F(z_0)^* v^* + \nabla G(z_0)^*
e^*=0.
\end{equation*}
Hence we have $(\lambda, v^*, e^*)\in \Lambda_2(z_0)$.

Let us define the following function
$$
\psi(t):=\mathcal{L}_3(u_0+ tu,\lambda, e^*)=\lambda J (u_0 +tu)
+e^*H(u_0 +tu),\ -1<t<1,\ u\in\mathcal{C}_3(u_0).
$$ Then, by (b), we have
$$
\psi''(0)=\nabla^2_u\mathcal{L}_3(u_0)(u, u)=\langle \lambda \nabla^2
 J(u_0)u, u\rangle +\langle e^*\nabla^2H(u_0)u, u\rangle\geq
0.
$$ On the other hand, by simple calculation, we get
\begin{align}
\psi''(0)=&\langle \lambda I_{xx}(z_0) \zeta'(u_0)u, \zeta'(u_0)u\rangle +
\langle \lambda I_{xu}(z_0)\zeta'(u_0)u, u\rangle + \langle \lambda
I_{ux}(z_0)u, \zeta'(u_0)u\rangle \notag\\
&+\langle \lambda I_{uu}(z_0)u, u\rangle+\langle e^* G_{xx}(z_0) \zeta'(u_0)u, \zeta'(u_0)u\rangle +
\langle e^*G_{xu}(z_0)\zeta'(u_0)u, u\rangle\notag\\
&+ \langle e^*G_{ux}(z_0)u, \zeta'(u_0)u\rangle+\langle e^*G_{uu}(z_0)u, u\rangle+ \langle(\lambda I_x(z_0)+ e^* G_x(z_0))\zeta''(u_0)u, u\rangle\notag
\\
=&\langle \lambda I_{xx}(z_0)x, x\rangle + \langle \lambda
I_{xu}(z_0)x, u\rangle +
\langle \lambda I_{ux}(z_0)u, x\rangle +\langle \lambda I_{uu}(z_0)u, u\rangle\notag\\
&+\langle e^*G_{xx}(z_0)x, x\rangle + \langle e^*G_{xu}(z_0)x,
u\rangle + \langle e^*G_{ux}(z_0)u, x\rangle +\langle
e^*G_{uu}(z_0)u, u\rangle\notag\\
&+ \langle (\zeta''(u_0)u)^*(\lambda I_x(z_0)+ e^* G_x(z_0)),
u\rangle.\label{Mulp4}
\end{align}
Taking second-order derivatives on both sides of \eqref{EqF2} at
$u_0$ and acting on $u\in \mathcal{C}''(u_0)$ and $v\in U$, we
obtain
\begin{align*}
\langle F_x(z_0) \zeta''(u_0)u, v\rangle + \langle
F_{xx}(z_0)\zeta'(u_0)u,
\zeta'(u_0)v\rangle &+\langle F_{xu}(z_0)\zeta'(u_0)u, v\rangle +\\
&+ F_{ux}(z_0)(u,\zeta'(z_0)v)+F_{uu}(z_0)(u, v)=0.
\end{align*}This is equivalent to
\begin{align*}
\langle F_x(z_0) \zeta''(u_0)u,& v\rangle=\\
&-\big[ \langle F_{xx}(z_0)x, \zeta'(u_0)v\rangle +\langle
F_{xu}(z_0)x, v\rangle+\langle
F_{ux}(z_0)u,\zeta'(z_0)v\rangle+\langle F_{uu}(z_0)u,
v\rangle\big].
\end{align*}
It follows that
\begin{equation*}
(\zeta''(u_0)u)^*=-[ F_{xx}(z_0)x\zeta'(u_0)
+F_{xu}(z_0)x+F_{ux}(z_0)u\zeta'(z_0)+F_{uu}(z_0)u]^*
(F_x(z_0)^*)^{-1}.
\end{equation*}
Combining this with formula \eqref{Mulp3},  we have
\begin{align*}
(\zeta''(u_0)u)^*(\lambda &I_x(z_0)+ e^* G_x(z_0))=-(\zeta''(u_0)u)^*\phi\\
&=-[ F_{xx}(z_0)x\zeta'(u_0)
+F_{xu}(z_0)x+F_{ux}(z_0)u\zeta'(z_0)+F_{uu}(z_0)u]^*
(F_x(z_0)^*)^{-1}\phi\\
&=-[ F_{xx}(z_0)x\zeta'(u_0)
+F_{xu}(z_0)x+F_{ux}(z_0)u\zeta'(z_0)+F_{uu}(z_0)u]^* v^*.
\end{align*}
Hence,
\begin{align*}
\langle (\zeta''(u_0)u)^*&(\lambda I_x(z_0)+ e^* G_x(z_0)), u\rangle=\\
&=\langle v^*F_{xx}(z_0)x,\zeta'(u_0)u\rangle +\langle
v^*F_{xu}(z_0)x, u\rangle +\langle F_{ux}(z_0)u,\zeta'(z_0)u\rangle
+\langle v^*F_{uu}(z_0)u, u\rangle\\
&=\langle v^*F_{xx}(z_0)x,x\rangle +\langle v^*F_{xu}(z_0)x,
u\rangle +\langle F_{ux}(z_0)u,x\rangle +\langle v^*F_{uu}(z_0)u,
u\rangle.
\end{align*}Inserting this term into \eqref{Mulp4}, we obtain
\begin{align*}
\langle \lambda \nabla^2 J(u_0)u, u\rangle  &+\langle e^*\nabla^2H(u_0)u, u\rangle=\\
&=\langle \lambda \nabla^2 I(z_0)d, d\rangle+ \langle
v^*\nabla^2F(z_0)d, d\rangle +\langle e^*\nabla^2 G(z_0)d,
d\rangle\geq 0.
\end{align*}The proof is complete.
\end{proof}

\section{Proofs of main results}\label{Section5}
\label{proofs}
\subsection{Proof of Theorem \ref{SOC-necessary-condition}}

For the proof, we first put
\begin{align*}
&X=C([0, 1], \mathbb{R}^n),\ U=L^\infty([0, 1], \mathbb{R}^l),\\
&E_0=C([0, 1], \mathbb{R}^n),\ E= L^\infty([0, 1], \mathbb{R}),
\end{align*} and define the following mappings
\begin{align*}
&F \colon X\times U\to E_0,\ F(x, u):=x-x_0-\int_0^{(\cdot)}\varphi(s,
x(s),
u(s))ds,\\
&G \colon X\times U \to E,\ G(x, u):= g(\cdot, x(\cdot), u(\cdot)).
\end{align*} The problem (MCP) can be formulated in the form of the
problem \eqref{problem3}. Therefore, we can apply Theorem \ref{ThSOC2} for
the (MCP) in order to derive necessary optimality conditions.

\medskip

\noindent {\bf Step 1.} Verification of assumptions $(A1)$--$(A3)$.

 $\bullet$ Verification of $(A1)$. From $(H1)$ we see that
the mapping $I, F$ and $G$ are of class $C^2$ around $\bar z$. Hence,
$(A1)$ is valid. Here $\nabla I_j(\bar z)$, $\nabla^2 I(\bar z)$,
$\nabla F(\bar z)$, $\nabla^2 F(\bar z)$, $\nabla G(\bar z)$ and
$\nabla^2 G(\bar z)$ are defined by:
\begin{align*}
& I_{jx}(\bar z)x=\int_0^1L_{jx}[s]x(s)ds,\  I_{ju}(\bar z)u=\int_0^1L_{ju}[s]u(s)ds,\\
&F_x(\bar z)x= x-\int_0^{(\cdot)}\varphi_x[s]x(s)ds,\ F_u(\bar z)u=
-\int_0^{(\cdot)}\varphi_u[s]u(s)ds),\\
&\nabla G(\bar z)=(G_x(\bar z), G_u(\bar z))=(g_x[\cdot],
g_u[\cdot]),\\
&\langle \nabla^2 I_j(\bar z)z, z\rangle=\int_0^1(\nabla^2
L_j[s]z(s), z(s))ds,\\
&\langle \nabla^2 F(\bar z)z,
z\rangle=-\int_0^{(\cdot)}(\nabla^2\varphi[s]z(s), z(s))ds
\end{align*}for all $z=(x, u)\in Z$, and
\begin{equation}\notag
\nabla^2 G(\bar z)=\left[\begin{array}{cc}
                       g_{xx}[\cdot] & g_{xu}[\cdot] \\
                       g_{ux}[\cdot] &g_{uu}[\cdot]
                     \end{array}
                     \right].
\end{equation}

 $\bullet$ Verification of $(A2)$. Taking any $v\in E_0$,
we consider equation $F_x(\bar z)x=v$. This equation is equivalent to
\begin{equation*}
x=\int_0^{(\cdot)}\varphi_x[s] x(s)ds + v.
\end{equation*}
By assumption $(H1)$, we have $\varphi_x[\cdot]\in L^\infty([0, 1],
\mathbb{R}^n)$. By \cite[Lemma 1, p. 51]{Ioffe}, the equation has a
unique solution $x\in X$. Hence $(A2)$ is valid.

$\bullet$ Verification of $(A3)$. Let $D:=\{(x, u)\in Z \;|\;
F(z)=0\}$. Under assumption  $(A2)$, the mapping $\nabla F(\bar z)\colon X\times U \to E_0$ is surjective. This implies that
\begin{equation*}
T(D; \bar z)=\{(x, u)\in Z \;|\; F_x(\bar z)x + F_u(\bar z)u=0\}.
\end{equation*}
Therefore, assumption $(A3)$ is amount to saying that for
each $v\in E$, there exists $(x, u)\in Z$ satisfying
\begin{align}
&F_x(\bar z)x + F_u(\bar z)u=0,\label{RCon1}\\
&G_x(\bar z)x +G_u(\bar z) u=v.\label{RCon2}
\end{align} We will find $u$ in the form $u=(0, 0, \ldots, u_{i_0}, 0, \ldots, 0)$.
Consider the following equation
\begin{equation*}
F_x(\bar z)x+F_u(\bar z)\frac{v-G_x(\bar z)x}{g_{i_0 u}[\cdot]}=0.
\end{equation*}
This equation is equivalent to
\begin{equation*}
x=\int_0^{(\cdot)}\left(\varphi_x[s] +
\varphi_u[s]\frac{g_x[s]}{g_{i_0 u}[\cdot]}\right)x(s)ds
+\int_0^{(\cdot)}\varphi_u[s]\frac{v}{g_{i_0 u}[s]}ds.
\end{equation*}
By $(H2)$, $\varphi_u[\cdot]\frac{g_x[\cdot]}{g_{i_0 u}[\cdot]}$
and $\varphi_u[\cdot]\frac{v}{g_{i_0 u}[\cdot]}$ belong to
$L^\infty([0, 1],\mathbb{R}^n)$. Thanks to \cite[Lemma 1, p. 51]{Ioffe},
the above equation has a unique solution $x\in X$. Choosing
$u=(0,0, \ldots, u_{i_0}, 0, \ldots, 0)$ with
$$
u_{i_0}= \frac{v-G_x(\bar z)x}{g_{i_0 u}[\cdot]}.
$$
We see that $(x, u)$ satisfies equations \eqref{RCon1}-\eqref{RCon2}. Hence  assumption $(A3)$ is fulfilled.

\medskip

\noindent {\bf Step 2.} Deriving optimality conditions.

Let $\mathcal{L}(z,\lambda, v^*, e^*)=\lambda I(x, u) + v^* F(x, u)
+ e^* G(x, u)$ be the Lagrangian associated with the (MCP). According to
Theorem \ref{ThSOC2}, for each $z=(\tilde x, \tilde u)\in
\mathcal{C}(\bar z)$, there exist multipliers $\lambda\in
\mathbb{R}^m_+$ with $|\lambda|=1$, $v^*\in E_0^*$ and $e^*\in E^*$
such that the following conditions are valid:
\begin{align}
&e^*\in N(Q, G(\bar z)),\label{NC1}\\
&\lambda I_x(\bar z) + v^* F_x(\bar z) + e^* G_x(\bar z)=0,\label{NC2}\\
&\lambda I_u(\bar z) + v^* F_u(\bar z) + e^* G_u(\bar z)=0,\label{NC3}\\
&\langle \lambda \nabla^2 I(\bar z)z, z\rangle + \langle v^*
\nabla^2 F(\bar z)z, z\rangle +\langle e^* \nabla^2 G(\bar z)z,
z\rangle \geq 0.\label{NC4}
\end{align} Here $v^*$ is a signed Radon measure and $e^*$ is a
signed and finite additive measure on $[0, 1]$ which is absolutely
continuous w.r.t the Lebesgue measure $|\cdot|$ on $[0, 1]$. By
Riesz's Representation (see \cite[Chapter 01, p. 19]{Ioffe} and
\cite[Theorem 3.8, p. 73]{Hirsch}), there exists a vector
function of bounded variation $\nu$, which is continuous from the
right and vanish at zero such that
$$
\langle v^*, y\rangle =\int_0^1 y(t)d\nu(t)\ \ \forall y\in E_0,
$$ where $\int_0^1 y(t)d\nu(t)$ is the Riemann-Stieltjes integral.

 Define $\bar p \colon [0, 1]\to \mathbb{R}^n$ by setting
$$
\bar p(t) =\nu((t, 1])=\nu(t)-\nu(1).
$$ Clearly, $\bar p(1)=0$ and the function $\bar p$ is of bounded variation. By
the Fubini Theorem, for each  $x\in C\left([0, 1], \mathbb{R}^n\right)$, we have
\begin{align}
\langle v^* F_x(\bar z), x\rangle&=\left\langle v^*,
x-\int_0^{(\cdot)}\varphi_x[s]x(s)ds\right\rangle\notag\\
&=\int_0^1 x^T(t)d\nu(t)-\int_0^1\int_0^t \varphi_x[s] x(s)ds d\nu(t)\notag\\
&=\int_0^1 x^T(t)d\nu(t) - \int_0^1
\varphi_x[s]x(s)ds \int_s^1d\nu(t)\notag\\
&=\int_0^1 x^T(t)d\nu(t) + \int_0^1 \varphi_x[s]x(s)\bar
p(s)ds.\label{Ac1}
\end{align}
Similarly, for any $u\in L^\infty([0, 1], \mathbb{R}^l)$, we get
\begin{equation}\label{Ac2}
\left\langle v^* F_u(\bar z), u\right\rangle=\int_0^1\int_0^t\varphi_u[s]u(s)ds
d\nu(t)=\int_0^1 \bar p(s)^T \varphi_u[s] u(s) ds
\end{equation} and
\begin{equation}\label{Ac3}
\left\langle v^*\nabla^2F(\bar z)z, z\right\rangle=\int_0^1 \left(\bar
p(t)^T\nabla^2\varphi[s]z(s), z(s)\right)ds.
\end{equation}From \eqref{NC3} and \eqref{Ac2}, we have
\begin{equation}\label{NC5}
\int_0^1\lambda L_u[s]u(s)ds + \int_0^1 \bar p(s)^T \varphi_u[s]
u(s) ds +\langle e^*,G_u(\bar z)u\rangle=0\ \ \forall u\in U.
\end{equation}

Let us claim that $e^*$ can be represented by a density
in $L^1([0, 1], \mathbb{R})$. Indeed, let $\bar d$ be an arbitrary  element of  $T(Q; G(\bar z))$. Then, by assumption $(H2)$, we have
\begin{align*}
|\omega g_u[t]|^2+[(\omega\bar d(t))^+]^2&\geq |\omega g_u[t]|^2
\\
&\geq \omega^2|g_{u_{i_0}}[t]|^2
\\
&\geq \alpha^2\omega^2
\end{align*}
for all $\omega\in\mathbb{R}$ and for a.e. $t\in[0,1]$. Thanks to \cite[Theorem 3.2]{Pales}, there exist measurable mappings $a \colon [0, 1]\times \mathbb{R}\to
\mathbb{R}^l$, $c \colon [0, 1]\times \mathbb{R}\to [0, \infty)$ and a
constant $R>0$ such that
\begin{equation*}
G_u(\bar z)a(t, \omega)=g_u[t]a(t, \omega)=\omega+ c(t, \omega)\bar d(t)
\end{equation*} and
\begin{equation*}
|a(t, \omega)|\leq R|\omega|, |c(t, \omega)|\leq R|\omega|
\end{equation*}
for all $\omega\in \mathbb{R}$. We now take any $v\in E$
and put $u(t)=a(t, v(t))$. Then $u\in U$ and we have
\begin{equation*}
\langle e^*, G_u(\bar z)a(\cdot, v(\cdot))\rangle= \langle e^*,
v\rangle +\langle e^*, c(\cdot, v)\bar d\rangle\leq \langle e^*, v\rangle
\end{equation*} because $e^*\in N(Q, G(\bar z))$ and $c(\cdot, v)\bar d\in
T(Q; G(\bar z))$. Inserting $u(t)=a(t, v(t))$ into \eqref{NC5}, we
get
\begin{equation*}
\int_0^1\lambda L_u[s]a(s, v(s))ds + \int_0^1 \bar p(s)^T
\varphi_u[s] a(s, v(s)) ds +\langle e^*, v\rangle\geq 0\ \ \forall
v\in E.
\end{equation*} This implies that
\begin{equation*}
|\langle e^*, v\rangle|\leq R\int_0^1|\lambda L_u[s]||v(s)|ds +R
\int_0^1|\bar p(s)^T \varphi_u[s]||v(s)| ds.
\end{equation*} From this and \cite[Proposition 5, p. 348]{Ioffe},  there is a function $\theta\in L^1([0, 1],
\mathbb{R})$ such that
\begin{equation}\label{L1-multiplier}
\langle e^*, v\rangle =\int_0^1 \theta(t) v(t)dt\ \ \forall v\in E.
\end{equation}
Therefore the claim is justified.

Based on the representation of $e^*$, \eqref{NC1}, and \eqref{NC5}, we obtain assertions (i) and (iii). Also, from \eqref{L1-multiplier}, \eqref{Ac1} and \eqref{NC2}, we get
\begin{equation*}
\int_0^1\lambda^TL_{x}[s] x(s) ds  +\int_0^1 x^T(t)d\nu(t) +
\int_0^1 \bar p(t)^T\varphi_x[s]x(s)ds + \int_0^1
\theta(s)^Tg_x[s]x(s)ds=0
\end{equation*} for all $x\in X$. This is equivalent to
\begin{equation}\label{KeyEq}
- \int_0^1 x^T(t)d\nu(t) = \int_0^1 \big( \lambda^T L_x[s]+ \bar
p(s)^T\varphi_x[s]+ \theta(s)^Tg_x[s]\big)x(s)ds\ \ \forall x\in X.
\end{equation}  We now fix any vector
$\xi\in\mathbb{R}^n$ and $t\in [0, 1]$. Define $x_t(s)=\xi\chi_{(t,
1]}(s)$, where $\chi_{(t, 1]}(\cdot)$ is the indicator function of
$(t, 1]$. Let us define
$$\vartheta(s)=\lambda^T L_x[s]+ \bar
p(t)^T\varphi_x[s]+ \theta(s)^Tg_x[s].
$$
Then, $\vartheta(\cdot)\in L^1([0, 1], \mathbb{R}^n)$ and so are the
functions $\vartheta(s)$ and $s\vartheta(s)$. By the Lebesgue
Differentiation Theorem (see \cite[Theorem 7.15]{Wheeden}), these
functions have Lebesgue points for  a.e. on $[0, 1]$. Let us denote
by $P$ and $P'$ the sets of Lebesgue points of $\vartheta(s)$ and
$s\vartheta(s)$, respectively. Then $|P\cap P'|=1$  and we have the
following key lemma.

\begin{lemma}\label{Keylemma} For each $t\in P\cap P'$,  the
following equality  is valid:
\begin{equation*}
- \int_0^1 x_t^T(s)d\nu(s) = \int_0^1 \big( \lambda^T L_x[s]+ \bar
p(s)^T\varphi_x[s]+ \theta(s)^Tg_x[s]\big)x_t(s)ds.
\end{equation*}
\end{lemma}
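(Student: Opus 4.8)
The plan is to extend the identity \eqref{KeyEq}, which a priori holds only for continuous test functions $x\in X$, to the discontinuous step function $x_t(\cdot)=\xi\chi_{(t,1]}(\cdot)$ by means of a uniformly bounded continuous approximation combined with two dominated convergence arguments. The whole obstruction is precisely that $x_t\notin C([0,1],\mathbb{R}^n)$, so \eqref{KeyEq} cannot be invoked directly; the task thus reduces to showing that both sides of \eqref{KeyEq} behave continuously along a suitable approximating sequence.

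First I would construct, for each $n\in\mathbb{N}$ with $1/n<1-t$, the continuous function $x_n\in X$ given by $x_n(s)=0$ on $[0,t]$, $x_n(s)=\xi$ on $[t+1/n,1]$, and $x_n(s)=n(s-t)\xi$ on $[t,t+1/n]$. Then $\|x_n\|_0\le|\xi|$ for every $n$, and $x_n(s)\to x_t(s)$ for each $s\in[0,1]$ (the only candidate exceptional point is $s=t$, where $x_n(t)=0=x_t(t)$). Applying \eqref{KeyEq} to each $x_n$ yields
\begin{equation*}
-\int_0^1 x_n^T(s)\,d\nu(s)=\int_0^1\big(\lambda^T L_x[s]+\bar p(s)^T\varphi_x[s]+\theta(s)^T g_x[s]\big)x_n(s)\,ds
\end{equation*}
for all such $n$. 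Writing $\vartheta$ for the $L^1([0,1],\mathbb{R}^n)$ integrand on the right-hand side, the next step is to let $n\to\infty$ in this identity.

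On the right-hand side, the integrand $\vartheta(s)x_n(s)$ is dominated by $|\xi|\,|\vartheta(s)|\in L^1([0,1],\mathbb{R})$, so by the Lebesgue dominated convergence theorem it converges to $\int_0^1\vartheta(s)x_t(s)\,ds$. On the left-hand side, recall that $v^*$ is represented through the function $\nu$ of bounded variation, whence $d\nu$ is a finite signed Radon measure; passing to its Jordan decomposition, the total variation $|d\nu|$ is a finite positive measure against which the constant bound $|x_n(s)|\le|\xi|$ is integrable. Since $x_n\to x_t$ pointwise everywhere, dominated convergence for the signed measure $d\nu$ gives $\int_0^1 x_n^T\,d\nu\to\int_0^1 x_t^T\,d\nu$. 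Combining the two limits establishes the asserted equality.

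I expect the left-hand side to be the only delicate point: because $d\nu$ may carry atoms, the integral of the discontinuous limit $x_t$ must be read as the Lebesgue--Stieltjes integral against $d\nu$, and the passage to the limit is justified by dominated convergence for the finite signed measure $d\nu$ rather than by any Riemann--Stieltjes continuity statement. It is worth noting that the restriction $t\in P\cap P'$ is not needed for the equality itself—the approximation above is valid for every $t\in[0,1]$—but $P\cap P'$ is exactly the full-measure set on which the resulting integral identity $\xi^T\bar p(t)=\big(\int_t^1\vartheta(s)\,ds\big)\xi$ may subsequently be differentiated to recover the adjoint equation, which is why it is recorded here.
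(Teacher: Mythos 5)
Your argument is correct, and it uses the very same approximating sequence as the paper: your $x_n$ is exactly the paper's ramp function $x_\epsilon$ with $\epsilon=t+1/n$, inserted into \eqref{KeyEq}. Where the two proofs genuinely differ is in how the limit is justified. The paper splits the Stieltjes side, controls the ramp piece $\int_t^\epsilon\frac{\xi(s-t)}{\epsilon-t}\,d\nu(s)$ by a mean value theorem for Riemann--Stieltjes integrals together with the right-continuity of $\nu$, and handles the Lebesgue piece by invoking the Lebesgue Differentiation Theorem at the points of $P\cap P'$; you replace both steps by dominated convergence --- against $|\xi|\,|\vartheta|\in L^1$ on the right and against the finite total-variation measure of $d\nu$ (after a componentwise Jordan decomposition, which the paper also performs by reducing to $\nu$ increasing) on the left, using that $x_n\to x_t$ pointwise everywhere with the uniform bound $|\xi|$. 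Your route is a bit more economical, and your side remark is accurate: the identity itself holds for every $t\in[0,1)$ once the left-hand side is read as a Lebesgue--Stieltjes integral, the right-continuity of $\nu$ being what identifies $\xi^T\,d\nu\big((t,1]\big)$ with $\xi^T(\nu(1)-\nu(t))=-\xi^T\bar p(t)$ so that the lemma feeds into the adjoint equation; the restriction to $P\cap P'$ is only needed afterwards, when $\int_t^1\vartheta(s)\,ds$ is differentiated to obtain $\dot{\bar p}$. (Indeed, even in the paper's own argument the Lebesgue-point hypothesis is dispensable at this stage, since $\big|\int_t^\epsilon\frac{\xi(s-t)}{\epsilon-t}\vartheta(s)\,ds\big|\le|\xi|\int_t^\epsilon|\vartheta(s)|\,ds\to0$ by absolute continuity of the integral.) No gap.
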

\begin{proof} Note that any function with  bounded
variation as well as any signed Radon measure can be represented as
the difference of two increasing functions, and the difference of
two positive Radon measures, respectively (see \cite[Corollary
2.7]{Wheeden} and \cite[Lemma 13.6]{Taylor}). Therefore, we can
assume that $\nu$ is increasing, right continuous and of bounded
variation.

 For each $\epsilon$ with
$t<\epsilon<1$, we define a function $x_\epsilon$ as follows.
\begin{equation*}
x_\epsilon (s)=
\begin{cases}
\xi\ &\text{if}\ s\in [\epsilon, 1],\\
\frac{\xi(s-t)}{\epsilon-t}\  &\text{if}\ s\in [t, \epsilon],\\
0\  &\text{if}\ s\in [0, t].
\end{cases}
\end{equation*}
Then, $x_\epsilon\in C([0, 1], \mathbb{R}^n)$. By
\eqref{KeyEq}, we have
\begin{equation*}
- \int_0^1 x_\epsilon^T(s)d\nu(s) = \int_0^1 \big( \lambda^T L_x[s]+
\bar p(t)^T\varphi_x[s]+ \theta(s)^Tg_x[s]\big)x_\epsilon(s)ds,
\end{equation*}
or, equivalently,
\begin{equation}\label{Conv0}
-\int_t^\epsilon \frac{\xi(s-t)}{\epsilon-t}
d\nu(s)-\int_\epsilon^1\xi d\nu(s)=\int_t^\epsilon
\frac{\xi(s-t)}{\epsilon-t}\vartheta(s)ds+\int_\epsilon^1\xi\vartheta(s)ds.
\end{equation}
By Mean Value Theorem (see \cite[Theorem 2.27, p. 33]{Wheeden}),
there is a point $t'\in [t, \epsilon]$ such that
$$
\int_t^\epsilon \frac{\xi(s-t)}{\epsilon-t}
d\nu(s)=\frac{\xi(t'-t)}{\epsilon-t}(\nu(\epsilon)-\nu(t)).
$$
Hence
\begin{align*}
 \bigg|\int_t^\epsilon \frac{\xi(s-t)}{\epsilon-t}
d\nu(s)\bigg|&=\frac{|\xi|
|(t'-t)|}{\epsilon-t}(\nu(\epsilon)-\nu(t))
\\
&\leq \frac{|\xi|
|\epsilon-t)|}{\epsilon-t}(\nu(\epsilon)-\nu(t))\\
&\leq |\xi| (\nu(\epsilon)-\nu(t)).
\end{align*}
By letting $\epsilon\to t^+$ and using the right
continuity of $\nu$, we see that
\begin{equation}\label{Conv1}
\bigg|\int_t^\epsilon \frac{\xi(s-t)}{\epsilon-t} d\nu(s)\bigg|\to
0\ \ \text{as}\ \ \epsilon\to t^+.
\end{equation} Also, we have
\begin{equation*}
\bigg|\int_t^1\xi d\nu(s)ds-\int_\epsilon^1\xi
d\nu(s)\bigg|=\bigg|\int_t^\epsilon\xi
d\nu(s)\bigg|\leq|\xi|(\nu(\epsilon)-\nu(t))\to 0 \ \ {\rm as}\ \
\epsilon\to t^+.
\end{equation*}
Consequently,
\begin{equation*}
\int_\epsilon^1\xi d\nu(s)ds\to \int_t^1\xi d\nu(s)\ \ {\rm as}\ \
\epsilon\to t^+.
\end{equation*} For the first term of \eqref{Conv0}, we have from the
Lebesgue Differentiation Theorem (see \cite[Theorem 7.16]{Wheeden}) that
\begin{align*}
\bigg| \int_t^\epsilon
\frac{\xi(s-t)}{\epsilon-t}\vartheta(s)ds\bigg|&\leq
|\xi|\frac{1}{\epsilon-t}\int_t^\epsilon |(s-t)\vartheta(s)| ds
\\
&\leq |\xi|\frac{1}{\epsilon-t}\int_t^\epsilon
|s\vartheta(s)-t\vartheta(t)|ds+|\xi|\frac{t}{\epsilon-t}
\int_t^\epsilon |\vartheta(t)-\vartheta(s)|ds\to 0
\end{align*}
as $\epsilon\to t^+$. Hence
\begin{equation*}
\bigg| \int_t^\epsilon
\frac{\xi(s-t)}{\epsilon-t}\vartheta(s)ds\bigg|\to 0\ \ {\rm as}\ \
\epsilon\to t^+.
\end{equation*} The convergence
\begin{equation}\label{Conv4}
\int_\epsilon^1\xi^T\vartheta(s)ds\to \int_t^1\xi^T\vartheta(s)ds\ \
{\rm as}\ \ \epsilon\to t^+
\end{equation} is straightforward.  Passing the limit both sides of
\eqref{Conv0} and using \eqref{Conv1}--\eqref{Conv4}, we obtain
\begin{equation*}
-\int_t^1\xi^T d\nu(s)=\int_t^1 \xi^T\vartheta(s) ds.
\end{equation*}
The proof of the lemma is complete.
\end{proof}

From Lemma \ref{Keylemma}, we have for a.e. $t\in [0, 1]$ that
\begin{equation*}
 -\int_t^1 \xi^T d\nu(s)= \int_t^1 \big( \lambda^T
L_x[t]+ \bar p(t)^T\varphi_x[t]+ \theta(t)g_x[t]\big)\xi ds
\end{equation*}
or, equivalently,
\begin{equation*}
\xi^T \bar p(t)=\int_t^1 \xi^T\big( \lambda^T L_x[t]+ \bar
p(t)^T\varphi_x[t]+ \theta(t)g_x[t]\big)ds.
\end{equation*} Since $\xi$ is arbitrary,  we obtain
\begin{align*}
&\dot{\bar p}(t)=-\lambda^T L_x[t]- \bar p(t)^T\varphi_x[t]-
\theta(t)g_x[t]\quad {\rm a.e.}\quad t\in [0, 1],\\
& \bar p(1)=0,
\end{align*} which is assertion (ii) of Theorem \ref{SOC-necessary-condition}. Finally, from \eqref{Ac3} and \eqref{NC4}, we have
\begin{align*}
\int_0^1\left(\sum_{j=1}^m\lambda_j\nabla^2 L_j[t]z(t),z(t)\right)dt&+ \int_0^1
\left(\bar p(t)^T\nabla^2\varphi[t]z(t), z(t)\right)dt
\\
&+\int_0^1\left(\theta(t)\nabla^2 g[t]z(t), z(t)\right)dt\geq 0,
\end{align*}
which is assertion (iv) of the theorem. The proof of Theorem \ref{SOC-necessary-condition} is complete.

\subsection{Proof of Theorem \ref{SOC-sufficient-condition}}

In this proof, we will use the Sobolev space $W^{1,2}([0, 1],
\mathbb{R}^n)$ which consists of absolutely continuous functions $x$
with $\dot x\in L^2([0, 1], \mathbb{R}^n)$.

Let us define the Lagrangian $\mathcal{L}(z, \lambda, \bar p,
\theta)$ associated with the (MCP) by setting
$$
\mathcal{L}(z, \lambda, \bar p, \theta):=\lambda^T I(z) + \bar p^T
F(z) +\theta G(z),
$$ where
\begin{align*}
&\lambda^T I(z)=\int_0^1\lambda^T L(s, x(s), u(s))ds,\\
& \bar p^T F(z)=\int_0^1 \dot{\bar p}(s)x(s)ds +\int_0^1\bar p(s)
\varphi(s, x(s), u(s))ds,\\
&\theta G(z)=\int_0^1 \theta(s) g(s, x(s), u(s))ds.
\end{align*}
Then, from conditions (ii) and (iii) of Theorem \ref{SOC-necessary-condition}, we can show that
\begin{equation}\label{FCon}
\nabla_z \mathcal{L}(\bar z, \lambda, \bar p, \theta)=0.
\end{equation}

We now return to the proof of the theorem. Suppose the the theorem was
false. Then, we could find sequences $\{(x_k, u_k)\}\subset \Phi$ and
$\{c_k\}\subset \mathrm{int}\,\mathbb{R}^m_+$ such that $(x_k, u_k) \to (\bar x,
\bar u)$,  $c_k\to 0$ and
\begin{equation}\label{Iq1}
I(x_k, u_k)- I(\bar x, \bar u) -c_k\|u_k-\bar u\|_2^2\in
-\mathbb{R}^m_+\setminus\{0\}.
\end{equation}
Clearly, $(x_k, u_k)\neq (\bar x, \bar u)$ for all $k\in\mathbb{N}$. By replacing the sequence  $\{(x_k, u_k)\}$ by a subsequence  we may assume that $u_k=\bar u$ or $u_k\neq \bar u$ for all $k\in\mathbb{ N}$. 
If $u_k=\bar u$ for all $k\in\mathbb{N}$, then we have
$$
x_k(t)=x_0+\int_0^t\varphi(s, x_k(s), \bar u(s))ds
$$ and
$$
\bar x(t)=x_0+\int_0^t\varphi(s, \bar x(s), \bar u(s))ds.
$$ 
Hence,
$$
x_k(t)-\bar x(t)=\int_0^t(\varphi(s, x_k(s), \bar u(s))-\varphi(s,
\bar x(s), \bar u(s)))ds.
$$ From this and $(H1)$, there exist numbers $M>0$ and $k_{\varphi M}>0$ such
that for $k$ large enough, we have
$$
|\varphi(s, x_k(s), \bar u(s))-\varphi(s, \bar x(s), \bar
u(s))|\leq k_{\varphi M}|x_k(t)-\bar x(t)|.
$$ Hence,
$$
|x_k(t)-\bar x(t)|\leq \int_0^t k_{\varphi M}|x_k(s)-\bar x(s)|ds.
$$ Using the Gronwall Inequality (see \cite[18.1.i, p. 503]{Lamberto}), we get $x_k=\bar x$, a contradiction. Therefore, we
have that $u_k\neq \bar u$ for all $k\in \mathbb{N}$.

 Define $t_k=\|u_k- \bar u\|_2$, $\hat x_k= \frac{x_k-\bar x}{t_k}$ and $\hat u_k= \frac{u_k-\bar u}{t_k}$.
Then $t_k\to 0^+$ and $\|\hat u_k \|_2=1$. Since $L^2([0, 1],
\mathbb{R}^l)$ is reflexive, we may assume that $\hat u_k
\rightharpoonup \hat u$. From the above, we have
\begin{equation}\label{KeyIq2}
\lambda^T I(z_k)-\lambda^T I(\bar z)\leq t_k^2\lambda^T c_k\leq
t_k^2 |\lambda||c_k|\leq o(t_k^2).
\end{equation} We claim that $\hat x_k$ converges uniformly to some
$\hat x$ in $C([0,1], \mathbb{R}^n)$. In fact, since $(x_k,
u_k)\in\Phi$, we have
\begin{equation*}
x_k(t)= x_0+\int_0^t \varphi(s, x_k(s), u_k(s))ds.
\end{equation*} Since $x_k=\bar x + t_k \hat x_k$, we have
\begin{equation}\label{DE1}
t_k\hat x_k(t)=\int_0^t (\varphi(s, x_k(s), u_k(s))-\varphi(s, \bar
x(s), \bar u(s))ds.
\end{equation} Since $x_k\to \bar x$ uniformly and $u_k\to \bar u$ in
$L^\infty([0, 1], \mathbb{R}^l)$, there exists a constant
$\varrho>0$ such that $\|x_k\|_0\leq\varrho,
\|u_k\|_\infty\leq\varrho$. By assumption $(H1)$, there exists
$k_{\varphi, \varrho}>0$ such that
$$
|\varphi(s, x_k(s), u_k(s))-\varphi(s, \bar x(s), \bar u(s))|\leq
k_{\varphi, \varrho}(|x_k(s)-\bar x(s)| +|u_k(s)-\bar u(s)|)
$$ for a.e. $s\in[0, 1]$. Hence we have from \eqref{DE1} that
\begin{equation*}
|\hat x_k(t)|\leq \int_0^t k_{\varphi, \varrho}(|\hat x_k(s)|+ |\hat
u_k(s)|)ds
\end{equation*} and
\begin{equation}\label{DEInq1}
|\dot{\hat x}_k(t)|\leq  k_{\varphi, \varrho}(|\hat x_k(t)|+ |\hat
u_k(t)|).
\end{equation} It follows that
\begin{align*}
|\hat x_k(t)|&\leq \int_0^t k_{\varphi, \varrho}|\hat x_k(s)|ds+
\int_0^1 k_{\varphi, \varrho}|\hat u_k(s)|ds\\
&\leq \int_0^tk_{\varphi, \varrho}|\hat x_k(s)|ds+ k_{\varphi,
    \varrho}\left(\int_0^1 |\hat u_k(s)|^2 ds\right)^{1/2}\\
&\leq \int_0^t k_{\varphi, \varrho}|\hat x_k(s)|ds+ k_{\varphi,
    \varrho}, \quad \quad (\|\hat u_k\|_2=1).
\end{align*}Using the Gronwall Inequality, we have
\begin{equation*}
|\hat x_k(t)|\leq k_{\varphi, \varrho}\exp(k_{\varphi, \varrho}).
\end{equation*}  From this and
\eqref{DEInq1}, we  see that
\begin{equation*}
|\dot{\hat x}_k(t)|^2\leq  2k^2_{\varphi, \varrho}\left(|\hat x_k(t)|^2+
|\hat u_k(t)|^2\right)\leq  2k^2_{\varphi, \varrho}(k^2_{\varphi,
    \varrho}\exp(2k_{\varphi, \varrho}) + |\hat u_k|^2).
\end{equation*}
Hence,
$$
\int_0^1 |\dot{\hat x}_k(t)|^2 dt\leq 2k^2_{\varphi,
    \varrho}(k^2_{\varphi, \varrho}\exp(2k_{\varphi, \varrho})+1).
$$ Consequently, $\{\hat x_k\}$ is bounded in $W^{1,2}([0, 1],
\mathbb{R}^n)$. By passing subsequence, we can assume that $\hat
x_k\rightharpoonup \hat x$ weakly in $W^{1,2}([0, 1],
\mathbb{R}^n)$. Thanks to \cite[Theorem 8.8]{Brezis1}, the embedding
$W^{1,2}([0, 1], \mathbb{R}^n)\hookrightarrow C([0, 1],
\mathbb{R}^n)$ is compact. Hence we have that $\hat x_k\to\hat x$
uniformly on $[0, 1]$. The claim is justified. The remains of the
proof is divided into  some steps.

\medskip

\noindent {\bf Step 1}. Showing that $(\hat x, \hat
u)\in\mathcal{C}'(\bar z)$.

By a Taylor expansion, we have from \eqref{Iq1} that
\begin{equation}\label{CriticalCon1}
I_x(\bar z)\hat x_k + I_u(\bar z)\hat u_k
+\frac{o(t_k)}{t_k}\in-\mathbb{R}_+^m.
\end{equation}Note that $L_{ju}[\cdot]\in L^\infty([0, 1], \mathbb{R}^l)$  and
$I_{ju}(\bar z) \colon L^2([0, 1], \mathbb{R}^l)\to\mathbb{R}$ is a continuous linear mapping,
where
$$
\langle I_{ju}(\bar z), u\rangle:=\int_0^1 L_{ju}[s] u(s) ds\ \quad
\forall u\in L^2([0, 1], \mathbb{R}^l).
$$ By \cite[Theorem 3.10]{Brezis1}, $I_{ju}(\bar z)$ is weakly
continuous on $L^2([0, 1], \mathbb{R}^l)$. By letting $k\to\infty$
in \eqref{CriticalCon1}, we get
\begin{equation}\label{c1}
I_x(\bar z) \hat x +I_u(\bar z)\hat u\in-\mathbb{R}^m_+.
\end{equation} Since $F(\bar z)=0, F(x_k, u_k)=0$ and by a Taylor
expansion, we have
$$
F_x(\bar z)\hat x_k + F_u(\bar z)\hat u_k +\frac{o(t_k)}{t_k}=0.
$$ By the same arguments as the above and letting $k\to\infty$, we
obtain
\begin{equation}\label{c2}
F_x(\bar z)\hat x + F_u(\bar z)\hat u=0.
\end{equation} Since $G(x_k, u_k)-G(\bar x, \bar u)\in Q-G(\bar x, \bar
u)$ and by a Taylor expansion, we have
$$
G_x(\bar z)\hat x_k + G_u(\bar z)\hat u_k +\frac{o(t_k)}{t_k}\in
{\rm cone}(Q-G(\bar x, \bar u))\subset T(Q; G(\bar x, \bar u)),
$$ where $T(Q; G(\bar x, \bar
u))$ is the tangent cone to $Q$ at $G(\bar x, \bar u)$ in
$L^\infty([0, 1], \mathbb{R})$. It is easily seen that
\begin{align*}
T(Q; G(\bar x, \bar u))&\subseteq\left\{ v\in L^\infty([0, 1],
\mathbb{R}) \;|\; v(t)\in T((-\infty, 0]; g[t])\quad {\rm a.e.}\right\}\\
&\subseteq\left\{ v\in L^2([0, 1], \mathbb{R}) \;|\; v(t)\in T((-\infty, 0];
g[t])\quad {\rm a.e.}\right\}.
\end{align*}
Hence,
\begin{equation}\label{CriticalCon2}
G_x(\bar z)\hat x_k + G_u(\bar z)\hat u_k +\frac{o(t_k)}{t_k}\in\left\{
v\in L^2([0, 1], \mathbb{R}) \;|\; v(t)\in T((-\infty, 0]; g[t])\quad {\rm
    a.e.}\right\}.
\end{equation} Note that
$$
\left\{ v\in L^2([0, 1], \mathbb{R}) \;|\; v(t)\in T((-\infty, 0]; g[t])\ {\rm
    a.e.}\right\}=T_{L^2}(Q; G(\bar x, \bar u)),
$$ where $T_{L^2}(Q; G(\bar x, \bar
u))$ is the tangent cone to the set $Q$ at $G(\bar x, \bar u)$ in $L^2([0,
1], \mathbb{R})$. Since  $T_{L^2}(Q; G(\bar x, \bar u))$ is a closed
convex set in $L^2([0, 1], \mathbb{R})$, it is also a weakly closed
set in $L^2([0, 1], \mathbb{R})$. Since
$$
G_u(\bar z) \colon L^2([0, 1], \mathbb{R}^l)\to L^2([0, 1], \mathbb{R})
$$ is a continuous linear mapping, \cite[Theorem 3.10]{Brezis1}
implies that it is continuous from  weakly topology of $L^2([0, 1],
\mathbb{R}^l)$ to weakly topology of $ L^2([0, 1], \mathbb{R})$. By
passing the limit in \eqref{CriticalCon2} when $k\to\infty$, we
obtain
\begin{equation*}
G_x(\bar z)\hat x + G_u(\bar u) \hat u\in \left\{ v\in L^2([0, 1],
\mathbb{R}) \;|\; v(t)\in T((-\infty, 0]; g[t])\ {\rm a.e.}\right\}.
\end{equation*} Combining this with \eqref{c1} and \eqref{c2}, we get
$(\hat x, \hat u)\in\mathcal{C}'(\bar z)$.

\medskip

\noindent {\bf Step 2}. Showing that $(\hat x, \hat u)=0$.

By a second-order Taylor expansion for $\mathcal{L}$ and
\eqref{FCon}, we get
\begin{equation*}
\mathcal{L}(z_k, \lambda ,\bar p, \theta)-\mathcal{L}(\bar
z,\lambda,\bar p, \theta)= \frac{t_k^2}{2}\nabla^2_{zz}\mathcal{L}(
\bar z, \lambda, \bar p, \theta)(\hat z_k,\hat z_k) +o(t_k^2),\quad
(\hat z_k=(\hat x_k, \hat u_k)).
\end{equation*}
On the other hand from \eqref{KeyIq2}, we have
\begin{equation*}
\mathcal{L}(z_k, \lambda,\bar p, \theta)-\mathcal{L}(\bar z,
\lambda,\bar p, \theta)= \lambda^T(I(z_k)-I(\bar z)) +\langle
\theta,G(z_k)-G(\bar z)\rangle \leq o(t_k^2).
\end{equation*} Here we used the fact that $\theta \in N(Q, G(\bar z))$ and $F(z_k)=F(\bar z)=0$. Therefore, we have
$$
\frac{t_k^2}{2}\nabla^2_{zz}\mathcal{L}(\bar z, \lambda, \bar p,
\theta)(\hat z_k, \hat z_k) +o(t_k^2)\leq o(t_k^2),
$$
or, equivalently,
\begin{equation}\label{iqL1}
\nabla^2_{zz}\mathcal{L}(\bar z,\lambda, \bar p, \theta)(\hat z_k,
\hat z_k) \leq \frac{o(t_k^2)}{t_k^2}.
\end{equation} By letting $k\to\infty$,   we obtain
\begin{equation*}
\nabla^2_{zz}\mathcal{L}(\bar z, \lambda, \bar p, \theta)(\hat z,
\hat z) \leq 0.
\end{equation*}
By a simple calculation, we have
\begin{align*}
\int_0^1\left(\lambda^T\nabla^2 L[t]\hat z(t),\hat z(t)\right)dt+ \int_0^1
\left(\bar p(t)^T\nabla^2\varphi[t]\hat z(t), \hat z(t)\right)dt
&+\int_0^1\left(\theta(t)\nabla^2 g[t]\hat z(t), \hat z(t)\right)dt\\
&=\nabla^2_{zz}\mathcal{L}(\bar z, \lambda, \bar p, \theta)(\hat z,
\hat z)\leq 0.
\end{align*} Combining this with \eqref{StrSOC}, we must have $\hat z=0$.

\medskip

\noindent {\bf Step 3.} Showing a contradiction.

From \eqref{StrSOC1} and \eqref{iqL1}, we have
\begin{align*}
\frac{o(t_k^2)}{t_k^2}\geq & \nabla_{zz}^2\mathcal{L}(\bar z,\lambda,
\bar p, \theta)(\hat z_k, \hat z_k)\\
=& \int_0^1\left(\lambda^T\nabla^2 L[t]\hat z_k(t),\hat
z_k(t)\right)dt+ \int_0^1 \left(\bar p(t)^T\nabla^2\varphi[t]\hat
z_k(t), \hat z_k(t)\right)dt
\\
&+\int_0^1\left(\theta(t)\nabla^2 g[t]\hat z_k(t), \hat z_k(t)\right)dt\\
=& \int_0^1\lambda^T L_{uu}[t]\hat u^2_k(t)dt+ 2\int_0^1\lambda^T
L_{xu}[t]\hat x_k(t)\hat u_k(t)dt +\int_0^1 \lambda^T
L_{xx}[t]\hat x^2_k(t)dt+\\
&+\int_0^1 \left(\bar p(t)^T\nabla^2\varphi[t]\hat z_k(t), \hat z_k(t)\right)dt
+\int_0^1\left(\theta(t)\nabla^2 g[t]\hat z_k(t), \hat z_k(t)\right)dt\\
\geq& \gamma_0+ 2\int_0^1\lambda^T L_{xu}[t]\hat x_k(t)\hat u_k(t)dt
+\int_0^1 \lambda^T
L_{xx}[t]\hat x^2_k(t)dt+\\
&+\int_0^1 \left(\bar p(t)^T\nabla^2\varphi[t]\hat z_k(t), \hat z_k(t)\right)dt
+\int_0^1\left(\theta(t)\nabla^2 g[t]\hat z_k(t), \hat z_k(t)\right)dt.
\end{align*}  By letting $k\to \infty$ and using the fact $\hat z=0$,  we
obtain $0\geq \gamma_0$, which is impossible. The proof of Theorem
\ref{SOC-sufficient-condition} is complete.

\section{Examples}
In this section, we give some examples to illustrate the main results.
 The first example shows us how to use Theorem \ref{SOC-necessary-condition}
 and Theorem \ref{SOC-sufficient-condition} to obtain solutions of the (MCP).
 The second one  indicates the important role of the second-order necessary optimality conditions in
 checking optimal solutions.
\label{illustrate-example}
\begin{example}\label{example-1}{\rm Consider the problem (MCP), where
\begin{align*}
L(t, x(t), u(t))&=(x_1^2(t)+u_1^2(t), x_2^2(t)+u_2^2(t)),
\\
\varphi(t, x(t), u(t))&=(u_1(t), u_2(t)),
\\
x_0&=(0,0),
\\
g(t, x(t), u(t))&=x_1(t)+x_2(t)-u_1(t)-u_2(t)
\end{align*}
for all $x(t)=(x_1(t), x_2(t))$, $u(t)=(u_1(t), u_2(t))$ and $t\in [0, 1]$. Then, the feasible solution set of the (MCP) is
\begin{align*}
\Phi=\bigg\{(x, u)\in C([0, 1], \mathbb{R}^2)\times L^{\infty}([0, 1], \mathbb{R}^2&)\,|\, x(t)=\int_{0}^{t}u(s)ds,
\\
&x_1(t)+x_2(t)-u_1(t)-u_2(t)\leq 0 \ \ \text{a.e.}\ \ t\in [0,1]\bigg\}.
\end{align*}
It is easy to see that conditions $(H1)$ and $(H2)$  are valid. We
will use conditions (i)--(iii) of Theorem
\ref{SOC-necessary-condition} to find out KKT points of the (MCP)  which
are good candidates for optimal solutions. Assume that  $\bar
z=(\bar x, \bar u)$ is a feasible solution of the (MCP) and satisfies
conditions (i)--(iii) of Theorem \ref{SOC-necessary-condition} with
respect to $(\lambda, \bar p, \theta)$.  By simple computations, we
have
\begin{align*}
L_{1x}[t]&=(2\bar x_1,0)^T, L_{2x}[t]=(0, 2\bar x_2)^T,  \varphi_{1x}[t]=\varphi_{2x}[t]=(0,0)^T, g_x[t]=(1, 1)^T,
\\
L_{1u}[t]&=(2\bar u_1,0)^T, L_{2u}[t]=(0, 2\bar u_2)^T, \varphi_{1u}[t]=(1,0)^T, \varphi_{2u}[t]=(0,1)^T, g_u[t]=(-1, -1)^T.
\end{align*}
From conditions (ii) and (iii), we get
\begin{equation}\label{equ-exam-1}
\begin{cases}
\dot{\bar p}_1=-2\lambda_1\bar x_1-\theta,
\\
\dot{\bar p}_2=-2\lambda_2\bar x_2-\theta,
\\
\bar{p}_1(1)=\bar{p}_2(1)=0,
\end{cases}
\end{equation}
and
\begin{equation}\label{equ-exam-2}
\begin{cases}
2\lambda_1\bar u_1+\bar p_1-\theta=0,
\\
2\lambda_2\bar u_2+\bar p_2-\theta=0.
\end{cases}
\end{equation}
Combining  \eqref{equ-exam-1} and \eqref{equ-exam-2} yields
\begin{equation}\label{equ-exam-3}
\begin{cases}
\dot{\bar p}_1=-2\lambda_1\bar x_1-2\lambda_1\bar u_1-\bar{p}_1,
\\
\dot{\bar p}_2=-2\lambda_2\bar x_2-2\lambda_2\bar u_2-\bar{p}_2.
\end{cases}
\end{equation}
By condition \eqref{P2}, one has
\begin{equation}\label{equ-exam-4}
\begin{cases}
\dot{\bar x}_1=\bar u_1,
\\
\dot{\bar x}_2=\bar u_2.
\end{cases}
\end{equation}
Then inserting these equations into \eqref{equ-exam-3}, we obtain
\begin{equation*}
\begin{cases}
\dot{\bar p}_1=-2\lambda_1\bar x_1-2\lambda_1\dot{\bar x}_1-\bar{p}_1,
\\
\dot{\bar p}_2=-2\lambda_2\bar x_2-2\lambda_2\dot{\bar x}_2-\bar{p}_2,
\end{cases}
\Leftrightarrow
\begin{cases}
\dot{\bar p}_1+2\lambda_1\dot{\bar x}_1=-\bar{p}_1-2\lambda_1\bar x_1,
\\
\dot{\bar p}_2+2\lambda_2\dot{\bar x}_2=-\bar{p}_2-2\lambda_2\bar x_2.
\end{cases}
\end{equation*}
This implies that
\begin{equation}\label{equ-exam-8}
\begin{cases}
\bar{p}_1+2\lambda_1\bar x_1=c_1 \exp{(-t)},
\\
\bar{p}_2+2\lambda_2\bar x_2=c_2 \exp{(-t)},
\end{cases}
\end{equation}
where $c_1, c_2\in\mathbb{R}$ are constants. Hence,
\begin{equation}\label{equ-exam-5}
\bar{p}_1-\bar{p}_2+2\lambda_1\bar x_1-2\lambda_2\bar x_2=c_3\exp{(-t)},
\end{equation}
where $c_3:=c_1-c_2$. From \eqref{equ-exam-2} and
\eqref{equ-exam-4}, we have
\begin{equation*}
\bar{p}_1-\bar{p}_2=-2\lambda_1\dot{\bar x}_1+2\lambda_2\dot{\bar x}_2.
\end{equation*}
Inserting this equation into \eqref{equ-exam-5}, we get
\begin{equation} \label{equ-exam-6}
-2\lambda_1\dot{\bar x}_1+2\lambda_2\dot{\bar x}_2+2\lambda_1\bar x_1-2\lambda_2\bar x_2=c_3\exp{(-t)},
\end{equation}
Let $\alpha=2\lambda_1\bar x_1-2\lambda_2\bar x_2$. Then equation \eqref{equ-exam-6} becomes
\begin{equation*}
-\dot{\alpha}+\alpha=c_3\exp(-t).
\end{equation*}
From this equation and $\alpha(0)=0$, it is easily seen that
$$\alpha(t)=\frac{1}{2}c_3\exp(-t) -\frac{1}{2}c_3\exp(t)\ \ \forall t\in [0, 1].$$
This and \eqref{equ-exam-4} imply that
\begin{equation}\label{equ-exam-7}
\begin{cases}
2\lambda_1\bar x_1-2\lambda_2\bar x_2=\frac{1}{2}c_3\exp(-t) -\frac{1}{2}c_3\exp(t),
\\
2\lambda_1\bar u_1-2\lambda_2\bar u_2=-\frac{1}{2}c_3\exp(-t) -\frac{1}{2}c_3\exp(t).
\end{cases}
\end{equation}
We see that, for each $\lambda\in\mathbb{R}^2_+\setminus\{0\}$,
every solution $(\bar x, \bar u)$ of  system \eqref{equ-exam-7} is a
KKT point of the (MCP). To illustrate Theorem
\ref{SOC-necessary-condition}, let us verify condition (iv) at a
solution of the system \eqref{equ-exam-7}. Let $\bar x=(0, 0), \bar
u=(0, 0)^T$.  By \eqref{equ-exam-8} and $\bar{p}(1)=(0,0)$, we
have $c_1=0$, $c_2=0$, and $\bar p(t)=(0,0)$ for all $t\in [0, 1]$.
Consequently, $c_3=0$ and $\theta(t)=0$ for all $t\in [0, 1]$. Thus,
$(\bar x, \bar u)$ is a solution of the system \eqref{equ-exam-7}
for every $\lambda\in\mathbb{R}^2_+\setminus\{0\}$. By simple
calculation, we get
\begin{equation*}
\int_0^1\left(\sum_{j=1}^m\lambda_j\nabla^2 L_j[t]z(t),z(t)\right)dt=2\lambda_1\int_{0}^{1}\left(x_1^2(t)+u_1^2(t)\right)dt+2\lambda_2\int_{0}^{1}\left(x_2^2(t)+u_2^2(t)\right)dt\geq 0
\end{equation*}
for all $z=(x, u)\in X\times U$ and $\lambda\in\mathbb{R}^2_+\setminus\{0\}$. Hence, condition (iv) is satisfied.

We now use Theorem \ref{SOC-sufficient-condition} to show that
$(\bar x, \bar u)$ is a locally strong Pareto solution of the (MCP). Let
$\lambda=(\frac{1}{2},\frac{1}{2})$, $\bar p=(0,0)$ and $\theta=0$.
Then, we have
\begin{equation*}
\int_0^1\left(\sum_{j=1}^m\lambda_j\nabla^2 L_j[t]z(t),z(t)\right)dt=\int_{0}^{1}\left(x_1^2(t)+u_1^2(t)\right)dt+\int_{0}^{1}\left(x_2^2(t)+u_2^2(t)\right)dt>0
\end{equation*}
for all $z=(x, u)\in X\times U\setminus\{(0,0)\}$. Hence condition \eqref{StrSOC} is satisfied.  Furthermore, we see that
\begin{equation*}
\lambda^T L_{uu}[t](\xi,\xi)=|\xi|^2
\end{equation*}
for all $\xi\in \mathbb{R}^2$. This implies that condition
\eqref{StrSOC1} holds at $\bar z$ with respect to $\gamma_0=1$.
Thanks to Theorem \ref{SOC-sufficient-condition}, $\bar z$ is a
locally strong Pareto solution of the (MCP).
}
\end{example}
\begin{example}{\rm  Let $\varphi$ and $g$ be as in Example \ref{example-1} and $L$ be defined by
$$
L(t, x(t), u(t))=(x_1^2(t)-u_1^2(t), x_2^2(t)-u_2^2(t))
$$ for all $x(t)=(x_1(t), x_2(t))$, $u(t)=(u_1(t), u_2(t))$ and $t\in [0, 1]$.
Clearly, $(\bar x, \bar u)=((0,0),(0,0))$ is a feasible point of the
(MCP). We claim that $(\bar x, \bar u)$ is not a locally weak Pareto
solution of the (MCP). Indeed, if otherwise, then due to Theorem
\ref{SOC-necessary-condition}, for each critical direction
$z\in\mathcal{C}(\bar z)$, there exist multipliers $\lambda$, $\bar
p$  and  $\theta$ such that conditions (i)--(iv) are fulfilled. Let
$\tilde x(t)=(t, t)^T$ and $\tilde u(t)=(1,1)^T$ for all $t\in [0,
1]$. It is easy to check that $z:=(\tilde x, \tilde u)$ is a
critical direction of the (MCP) at $\bar z$.  By conditions (ii) and
(iii) of Theorem 2.1, we get
\begin{equation*}
\begin{cases}
\dot{\bar p}=-\theta(1,1)^T,
\\
p(1)=0,
\\
\bar p+\theta(-1, -1)^T=0,
\end{cases}
\Leftrightarrow
\begin{cases}
\dot{\bar p}_1=\dot{\bar p}_2=-\theta,
\\
p(1)=0,
\\
\bar{p}_1=\bar{p}_2=\theta.
\end{cases}
\end{equation*}
This implies that $\bar{p}_1(t)=\bar{p}_2(t)=\theta(t)=0$ for all $t\in [0,1]$.
Since $\lambda\in\mathbb{R}^2_+\setminus\{0\}$, $\bar p=(0, 0)^T$ and $\theta=0$, we have
\begin{align*}
&\int_{0}^{1}\left(\lambda_1\nabla^2L_1[t]z(t)+\lambda_2\nabla^2L_2[t]z(t),
z(t)\right)dt+ \int_0^1 \left(\bar p(t)^T\nabla^2\varphi[t]z(t),
z(t)\right)dt
\\
&+\int_0^1\left(\theta(t)\nabla^2 g[t]z(t), z(t)\right)dt
\\
=&\int_{0}^{1}\left(\lambda_1\nabla^2L_1[t]z(t)+\lambda_2\nabla^2L_2[t]z(t), z(t)\right)dt
\\
=&\int_{0}^{1}(\lambda_1+\lambda_2)(2t^2-2)dt=-\frac{4}{3}(\lambda_1+\lambda_2)<
0,
\end{align*}
which does not satisfy  condition (iv) of Theorem 2.1. Hence, $(\bar
x, \bar u)$ is not a locally weak Pareto solution of the (MCP). }
\end{example}

\section*{Acknowledgments}
The research of B.T. Kien was  partially supported by the
Vietnam-Russia Project {\it ``Development of the effective
algorithms for mathematical models of power plants equipment''}. J.-C. Yao was partially supported by the Grant MOST 105-2221-E-039-009-MY3.

\end{document}